\documentclass[12pt,reqno,twoside]{amsart}
\usepackage{graphicx}
\usepackage{amssymb}
\usepackage{epstopdf}
\usepackage[asymmetric,top=3.5cm,bottom=4.3cm,left=3.1cm,right=3.1cm]{geometry}
\geometry{a4paper}

\usepackage{booktabs} 
\usepackage{array} 
\usepackage{paralist} 
\usepackage{verbatim} 
\usepackage{subfig} 
\usepackage{tabularx}
\usepackage{amsmath,amsfonts,amsthm,mathrsfs,amssymb,cite}
\usepackage[usenames]{color}
\usepackage{bm}

\newtheorem{thm}{Theorem}[section]

\theoremstyle{definition}
\newtheorem{defn}{Definition}[section]
\theoremstyle{remark}

\newtheorem{rem}{Remark}[section]
\numberwithin{equation}{section}

\newcommand{\cl}{\mathcal{L}}
\newcommand{\bu}{\mathbf{u}}
\newcommand{\bv}{\mathbf{v}}
\newcommand{\bV}{\mathbf{V}}
\newcommand{\bw}{\mathbf{w}}
\newcommand{\bff}{\mathbf{f}}


\title[3D Plasmon Resonance in Elastostatics]{On Three-dimensional Plasmon Resonance in Elastostatics}

\author{Hongjie Li}
\address{Department of Mathematics, Hong Kong Baptist University, Kowloon Tong, Hong Kong SAR.\vspace*{-4mm}}
\address{\vspace*{-4mm}and}
\address{HKBU Institute of Research and Continuing Education, Virtual University Park, Shenzhen, P. R. China.}
\email{hongjie$_{-}$li@yeah.net}

\author{Hongyu Liu}
\address{Department of Mathematics, Hong Kong Baptist University, Kowloon Tong, Hong Kong SAR.\vspace*{-4mm}}
\address{\vspace*{-4mm}and}
\address{HKBU Institute of Research and Continuing Education, Virtual University Park, Shenzhen, P. R. China.}
\email{hongyu.liuip@gmail.com; hongyuliu@hkbu.edu.hk}

\begin{document}
\maketitle

\begin{abstract}

We consider the plasmon resonance for the elastostatic system in $\mathbb{R}^3$ associated with a very broad class of sources. The plasmonic device takes a general core-shell-matrix form with the metamaterial located in the shell. It is shown that the plasmonic device in the literature which induces resonance in $\mathbb{R}^2$ does not induce resonance in $\mathbb{R}^3$. We then construct two novel plasmonic devices with suitable plasmon constants, varying according to the source term or the loss parameter, which can induce resonances. If there is no core, we show that resonance always occurs. If there is a core of an arbitrary shape, we show that the resonance strongly depends on the location of the source. In fact, there exists a critical radius such that resonance occurs for sources lying within the critical radius, whereas resonance does not occur for source lying outside the critical radius. Our argument is based on the variational technique by making use of the primal and dual variational principles for the elastostatic system, along with the highly technical construction of the associated perfect plasmon elastic waves.

\medskip

\medskip

\noindent{\bf Keywords:}~~anomalous localized resonance, plasmonic material, negative elastic materials, elastostatics

\noindent{\bf 2010 Mathematics Subject Classification:}~~35B34; 74E99; 74J20

\end{abstract}

\section{Introduction}

\subsection{Mathematical setup}

In this paper, we consider the plasmon resonance for the elastostatic system, generalising and extending our two-dimensional study in \cite{LiLiu2d} to the much more challenging three-dimensional case. Following \cite{LiLiu2d}, we first introduce the mathematical setup of the elastostatic system and plasmon resonance. Let $\mathbf{C}(x):=(\mathrm{C}_{ijkl}(x))_{i,j,k,l=1}^N$, $x\in\mathbb{R}^N$ with $N=2,3,$ be a four-rank tensor such that
 \begin{equation}\label{eq:tensor}
 \mathrm{C}_{ijkl}(x):=\lambda(x)\delta_{ij}\delta_{kl}+\mu(x)(\delta_{ik}\delta_{jl}+\delta_{il}\delta_{jk}),\ \ x\in\mathbb{R}^N,
 \end{equation}
 where $\lambda, \mu\in\mathbb{C}$ are real-valued functions, and $\delta$ is the Kronecker delta. $\mathbf{C}(x)$ describes an isotropic elastic material tensor distributed in the space $\mathbb{R}^N$, where $\lambda$ and $\mu$ are referred to as the Lam\'e constants. For a regular elastic material, it is required that the Lam\'e constants satisfy the following strong convexity condition,
 \begin{equation}\label{eq:convex}
 \mu>0\qquad\mbox{and}\qquad N\lambda+2\mu>0.
 \end{equation}
In the sequel, we write $\mathbf{C}_{\lambda,\mu}$ to specify the dependence of the elastic tensor on the Lam\'e parameters $\lambda$ and $\mu$.

Let $\Sigma$ and $\Omega$ be bounded domains in $\mathbb{R}^N$ with connected Lipschitz boundaries such that $\Sigma\Subset\Omega$. Consider an elastic parameter distribution $\mathbf{C}_{\widetilde{\lambda}, \widetilde{\mu}}$ given with
\begin{equation}\label{eq:tensor1}
\big(\widetilde{\lambda}(x), \widetilde{\mu}(x)\big)=\big(A(x)+\mathrm{i}\delta\big)(\lambda, \mu), \quad x\in\mathbb{R}^N,
\end{equation}
where $\delta\in\mathbb{R}_+$ denotes a loss parameter; $(\lambda, \mu)$ are two Lam\'e constants satisfying the strong convexity condition \eqref{eq:convex}; and $A(x)$ has a matrix-shell-core representation in the following form
\begin{equation}\label{eq:tensor2}
A(x)=\begin{cases}
+1,\qquad & x\in\Sigma,\medskip\\
c,\qquad & x\in\Omega\backslash\overline{\Sigma},\medskip\\
+1,\qquad & x\in\mathbb{R}^N\backslash\overline{\Omega},
\end{cases}
\end{equation}
where $c$ is constant that will be specified later. The choice of $c$ is critical in our study and in principle, it will be negative-valued. In such a case, $c$ is called a plasmon constant and \eqref{eq:tensor1} is referred to as a plasmonic device.

Let $\mathbf{f}$ be an $\mathbb{R}^N$-valued function that is compactly supported outside $\Omega$ satisfying
\begin{equation}\label{eq:source1}
\int_{\mathbb{R}^N}\mathbf{f}(x)\ dV(x)=0.
\end{equation}
$\mathbf{f}$ signifies an elastic source/forcing term.

Let $\mathbf{u}_\delta(x)\in\mathbb{C}^N$, $x\in\mathbb{R}^N$, denote the displacement field in the space that is occupied by the elastic configuration $(\mathbf{C}_{\widetilde{\lambda},\widetilde{\mu}},\mathbf{f})$ as described above. In the quasi-static regime, $\mathbf{u}_\delta(x)\in H_{\mathrm{loc}}^1(\mathbb{R}^N)^N$ verifies the following Lam\'e system
\begin{equation}\label{eq:lame1}
\begin{cases}
&\mathcal{L}_{\widetilde\lambda,\widetilde\mu} \mathbf{u}_\delta(x)=\mathbf{f}(x),\quad x\in\mathbb{R}^N,\medskip\\
& \mathbf{u}_\delta|_-=\mathbf{u}_\delta|_+,\quad \partial_{\bm{\nu}_{\widetilde\lambda,\widetilde\mu}} \mathbf{u}_{\delta}|_-=\partial_{\bm{\nu}_{\widetilde\lambda,\widetilde\mu}} \mathbf{u}_{\delta}|_+\quad\mbox{on}\ \ \partial\Sigma\cup\partial\Omega,\medskip\\
&\mathbf{u}_\delta(x)=\mathcal{O}\big(\|x\|^{-1}\big)\quad\mbox{as}\ \ \|x\|\rightarrow+\infty,
\end{cases}
\end{equation}
where the partial differential operator (PDO) $\mathcal{L}_{\widetilde{\lambda},\widetilde{\mu}}$ is given as follows,
\begin{equation}\label{eq:lame2}
\mathcal{L}_{\widetilde{\lambda},\widetilde{\mu}} \mathbf{u}_\delta:=\nabla\cdot\mathbf{C}_{\widetilde{\lambda},\widetilde{\mu}} {\nabla}^s\mathbf{u}_\delta=\widetilde{\mu}\Delta\mathbf{u}_\delta+(\widetilde{\lambda}+\widetilde{\mu})\nabla\nabla\cdot\mathbf{u}_\delta,
\end{equation}
with ${\nabla}^s$ defined to be the symmetric gradient
\[
{\nabla}^s\mathbf{u}_\delta:=\frac 1 2(\nabla\mathbf{u}_\delta+\nabla\mathbf{u}_\delta^T),
\]
and $T$ signifying the matrix transpose. In \eqref{eq:lame1}, the conormal derivative (or traction) is defined by
\begin{equation}\label{eq:normald}
\partial_{\bm{\nu}_{\widetilde\lambda,\widetilde\mu}}\mathbf{u}_\delta=\frac{\partial \bu_\delta}{\partial \bm{\nu}_{\widetilde\lambda,\widetilde\mu}}:= \widetilde\lambda(\nabla \cdot \ \bu_\delta)\bm{\nu}+ \widetilde\mu(\nabla \bu_\delta + \nabla \bu_\delta^T)\bm{\nu}\quad \mbox{on}\ \ \partial\Sigma\ \mbox{or}\ \partial \Omega,
\end{equation}
where $\bm{\nu}$ denotes the exterior unit normal to $\partial \Sigma/\partial \Omega$,
and the $\pm$ signify the traces taken from outside and inside of the domain $\Sigma/\Omega$, respectively.

Next, for $\mathbf{u}\in H^1_{\text{loc}}(\mathbb{R}^N)^N$ and $\mathbf{v}\in H^1_{\text{loc}}(\mathbb{R}^N)^N$, we introduce
\begin{equation}\label{eq:energy1}
\mathbf{P}_{\lambda,\mu}(\mathbf{u},\mathbf{v}):=\int_{\mathbb{R}^N}\big[\lambda(\nabla\cdot\mathbf{u})\overline{(\nabla\cdot\mathbf{v})}(x)+2\mu\nabla^s\mathbf{u}:\overline{\nabla^s\mathbf{v}}(x) \big]\ d V(x),
\end{equation}
where and also in what follows, for two matrices $\mathbf{A}=(a_{ij})_{i,j=1}^N$ and $\mathbf{B}=(b_{ij})_{i,j=1}^N$,
\begin{equation}\label{eq:matrixnorm}
\mathbf{A}:\mathbf{B}=\sum_{i,j=1}^N a_{ij}b_{ij}.
\end{equation}
For the solution $\mathbf{u}_\delta$ to \eqref{eq:lame1}, we define
\begin{equation}\label{eq:energy2}
\mathbf{E}_\delta(\mathbf{C}_{\widetilde{\lambda},\widetilde{\mu}}, \mathbf{f}):=\frac{\delta}{2}\mathbf{P}_{\lambda,\mu}(\mathbf{u}_\delta, \mathbf{u}_\delta),
\end{equation}
which is the imaginary part of
\[
\frac 1 2 \int_{\mathbb{R}^N}\overline{\nabla^s\mathbf{u}_\delta}:\mathbf{C}_{\widetilde{\lambda},\widetilde{\mu}}\nabla^s\mathbf{u}_\delta\, dV.
\]
$\mathbf{E}_\delta$ signifies the energy dissipation of the elastostatic system \eqref{eq:lame1}.

\begin{defn}\label{def:resonant}
The configuration $(\mathbf{C}_{\widetilde{\lambda},\widetilde{\mu}}, \mathbf{f})$ in \eqref{eq:lame1} is said to be \emph{resonant} if
\begin{equation}\label{eq:def1}
\lim_{\delta\rightarrow+0}\mathbf{E}_\delta(\mathbf{C}_{\widetilde{\lambda},\widetilde{\mu}},\mathbf{f})=+\infty;
\end{equation}
and it is said to be \emph{weakly resonant} if
\begin{equation}\label{eq:def2}
\limsup_{\delta\rightarrow+0}\mathbf{E}_\delta(\mathbf{C}_{\widetilde{\lambda},\widetilde{\mu}},\mathbf{f})=+\infty.
\end{equation}
\end{defn}

\subsection{Main results and connection to existing studies}

As mentioned earlier that in the elastic tensor $\mathbf{C}_{\widetilde\lambda,\widetilde\mu}$, the plasmon constant $c$ will be negatively valued. Hence, in the limiting case as the loss parameter $\delta$ goes to zero, the corresponding PDO $\mathcal{L}_{\widetilde\lambda,\widetilde\mu}$ loses its ellipticity. In this limiting case, the non-elliptic PDO $\mathcal{L}_{\widetilde\lambda,\widetilde\mu}$ possesses an infinite dimensional kernel space. Hence, ``anomalous" resonance will be induced by such an infinite dimensional kernel, which is generally referred to as plasmon resonance in the literature. It is not surprising that the resonant field demonstrates a highly oscillatory behaviour, reflected by the blowup of the associated energy; see Definition~\ref{def:resonant}. It is rather surprising that such a blowup behaviour is localized within a specific region with sharp boundaries not defined by any discontinuities in the material parameters, but the field converges to a smooth field outside that region as $\delta$ goes to zero. Another surprisingly interesting feature of the plasmon resonance is that it strongly depends on the location of the forcing source.

Plasmon materials, a.k.a. negative materials, have received significant attentions in the literature in recent years, especially in the optics. The associated plasmon resonances can find many striking applications in science and technology such as invisibility cloaking and imaging resolution enhancement. We refer to \cite{Acm13,Ack13,Ack14,AK,Bos10,Brl07,CKKL,Klsap,LLL,GWM1,GWM2,GWM3,GWM4,GWM6,GWM7,GWM8,GWM9} for the relevant study in electrostatics, \cite{ADM,AMRZ,AKL,KLO} for acoustic waves and \cite{ARYZ} for electromagnetic waves. In recent two papers \cite{AKKY,LiLiu2d}, the plasmon resonance was investigated for the elastostatic system \eqref{eq:tensor1}--\eqref{eq:lame1} in $\mathbb{R}^2$. Briefly summarising, it has been shown in \cite{AKKY,LiLiu2d} that in $\mathbb{R}^2$, when $c$ in \eqref{eq:tensor2} is a suitable negative constant being fixed, then for a broad class of forcing terms, resonance occurs. Both in \cite{AKKY} and \cite{LiLiu2d}, the dependence of the plasmon resonance on the location of the source has been shown. In \cite{AKKY}, the localized and cloaking effects of the plasmon resonance have been derived.  In this paper, we shall generalise and extend the relevant two-dimensional study to the much more challenging three-dimensional case. In summary, the following results have been achieved in the present paper.

\begin{enumerate}

\item The plasmon resonance critically associated to the nontrivial kernel of the non-elliptic PDO $\mathcal{L}_{\widetilde\lambda,\widetilde\mu}$ in the limiting case. Hence, the proper choice of the plasmon constant $c$ in \eqref{eq:tensor2} such that $\mathcal{L}_{\widetilde\lambda,\widetilde\mu}$ with $\delta=0$ possesses an infinite dimensional kernel is critical for the occurrence of the plasmon resonance. In \cite{AKKY, LiLiu2d}, the plasmon constant is a fixed negative constant, and it is derived based on the investigation of the spectral properties of the corresponding Neumman-Poincar\'e operator in solving the underlying elastostatic system in $\mathbb{R}^2$; see Remark~\ref{rem:NP} for more relevant discussion. Indeed, this is the main tool in deriving the critical plasmon constant in most of the literature mentioned earlier in the optical case. However, the spectral properties of the corresponding Neumann-Poincar\'e operator for the elastostatic system in $\mathbb{R}^3$ are not yet known. In Section~\ref{sect:3}, based on certain purely analysis means, we derive all the possible plasmon constants as well as the associated infinite dimensional kernel of the PDO $\mathcal{L}_{\widetilde\lambda,\widetilde\mu}$ in the limiting case with $\delta=0$. This also cats light on the corresponding investigation of the spectral properties of the Neumann-Poincar\'e operator for the elastostatic system in $\mathbb{R}^3$.

\item For a very broad class of forcing terms, it is shown that resonance does not occur for the plasmonic device \eqref{eq:tensor1}--\eqref{eq:tensor2} with a fixed constant $c$, which includes the one considered in \cite{AKKY,LiLiu2d} for the elastostatic system in $\mathbb{R}^2$. That is, the plasmonic device which induces resonance in $\mathbb{R}^2$ does not induce resonance in $\mathbb{R}^3$.

\item By properly choosing the plasmon constant $c$, varying according to the forcing source or the loss parameter $\delta$, we construct two novel plasmon devices, one with a core and the other without a core. If there is no core, we show that resonance always occurs, whereas if the core is nonempty and of an arbitrary shape, we show that there exists a critical radius such that resonance occurs when the source lies within the critical radius, whereas resonance does not occur when the source lies outside the critical radius. Our argument follows the general variational strategy as that in \cite{LiLiu2d}, we can only show the resonance results, and we cannot deal with the localized and cloaking effects.

\end{enumerate}

Finally, we mention in passing that the existence of exotic elastic materials with negative stiffness was reported in the physical literature; see \cite{KM} and \cite{LLBW}.

The rest of the paper is organized as follows. In Section 2, we provide some preliminary knowledge including the variational principles and the spherical harmonic representations for the elastostatic system. In Section 3, we derive the plasmon constants and construct the perfect plasmon elastic waves. Section 4 is devoted to the resonance and non-resonance results. The paper is concluded in Section 5.

\section{Preliminaries for the elastostatic system}

In this section, we collect some preliminary knowledge for the elastostatic system \eqref{eq:lame1}, including the variational principles and the spherical harmonic representations. Throughout the paper, we assume that the force term $\mathbf{f}=({f}_i)_{i=1}^3\in H^{-1}(\mathbb{R}^3)^3$ in \eqref{eq:lame1} with a compact support and a zero average in the sense that
\begin{equation}\label{eq:average}
\langle {f}_i, \mathbf{1}\rangle=0,\quad i=1,2,3,
\end{equation}
where $\mathbf{1}: \mathbb{R}^3\rightarrow\mathbb{R}$ is the constant function, $\mathbf{1}(x)=1$ for all $x\in\mathbb{R}^3$. In what follows, we let $B_R$ with $R\in\mathbb{R}_+$ denote a central ball of radius $R$ in $\mathbb{R}^3$. Without loss of generality, we assume that there exists $R_0\in\mathbb{R}_+$ such that $\mathrm{supp}(\mathbf{f})\subset B_{R_0}$. In the subsequent study, we shall also need the following Banach space
\begin{equation}\label{eq:weak2}
\mathcal{S}:=\big\{ \bu\in H_{\text{loc}}^1(\mathbb{R}^3)^3;\ \nabla\bu\in L^2(\mathbb{R}^3)^{3\times 3}\ \ \mbox{and}\ \ \int_{B_{R_0}}\bu=0 \big\},
\end{equation}
endowed with the Sobolev norm for $\mathbf{u}=(u_i)_{i=1}^N$,
\begin{equation}\label{eq:norms}
\|\mathbf{u}\|_{\mathcal{S}}:=\left(\int_{\mathbb{R}^N}\sum_{i=1}^N \|\nabla u_i\|^2\, dV+\int_{B_{R_0}}\|\mathbf{u}\|^2\, dV\right)^{1/2}.
\end{equation}

\subsection{Variational principles}
For self-containedness, we present the primal and dual variational principles for the elastostatic system \eqref{eq:lame1}, which were established in \cite{LiLiu2d} and shall play a critical role in our subsequent plasmon resonance study. For a fixed force term $\mathbf{f}\in H^{-1}(\mathbb{R}^3)^3$ and for the solution $\mathbf{u}_\delta\in H^1_{\text{loc}}(\mathbb{R}^3)^3: \mathbb{R}^3\rightarrow\mathbb{C}^3$ in \eqref{eq:lame1}, we set
\begin{equation}\label{eq:decomp1}
\mathbf{u}_\delta=\mathbf{v}_\delta+\mathrm{i}\frac{1}{\delta}\mathbf{w}_\delta,
\end{equation}
where $\mathbf{v}_\delta, \mathbf{w}_\delta\in H^1_{\text{loc}}(\mathbb{R}^3)^3:\mathbb{R}^3\rightarrow\mathbb{R}^3$ satisfying $\mathbf{v}_\delta=\mathcal{O}(\|x\|^{-1})$ and $\mathbf{w}_\delta=\mathcal{O}(\|x\|^{-1})$ as $\|x\|\rightarrow+\infty$. One has
\begin{align}
& \mathcal{L}_{\lambda_A,\mu_A}\mathbf{v}_\delta-\mathcal{L}_{\lambda,\mu}\mathbf{w}_\delta=\mathbf{f},\label{eq:couple1}\\
& \mathcal{L}_{\lambda_A,\mu_A}\mathbf{w}_\delta+\delta^2\mathcal{L}_{\lambda,\mu}\mathbf{v}_\delta=\mathbf{0},\label{eq:couple2}
\end{align}
where
\begin{equation}
  (\lambda_A(x),\mu_A(x)):=A(x)(\lambda,\mu), \quad x\in\mathbb{R}^3
\end{equation}
with $A$ is given in \eqref{eq:tensor2}, and $(\lambda, \mu)$ are the two regular Lam\'e constants in \eqref{eq:tensor1}. Furthermore, there holds
\begin{equation}\label{eq:energy3}
\begin{split}
\mathbf{E}_\delta(\mathbf{u}_\delta):=&\mathbf{E}_\delta(\mathbf{C}_{\widetilde{\lambda},\widetilde{\mu}},\mathbf{f})=\frac{\delta}{2}\mathbf{P}_{\lambda,\mu}(\mathbf{u}_\delta,\mathbf{u}_\delta)\\
=&\frac{\delta}{2}\mathbf{P}_{\lambda,\mu}(\mathbf{v}_\delta,\mathbf{v}_\delta)+\frac{1}{2\delta}\mathbf{P}_{\lambda,\mu}(\mathbf{w}_\delta,\mathbf{w}_\delta),
\end{split}
\end{equation}
where $\mathbf{E}_\delta$ is given in \eqref{eq:energy2} and $\mathbf{P}_{\lambda,\mu}$ is given in \eqref{eq:energy1}.

Next, we introduce the following energy functionals
\begin{align}
&\label{eq:ef1}\mathbf{I}_\delta(\mathbf{v},\mathbf{w}):=\frac\delta 2 \mathbf{P}_{\lambda,\mu}(\mathbf{v},\mathbf{v})+\frac{1}{2\delta}\mathbf{P}_{\lambda,\mu}(\mathbf{w},\mathbf{w})\quad\mbox{for}\ \ \ (\mathbf{v}, \mathbf{w})\in \mathcal{S}\times\mathcal{S},\\
&\label{eq:ef2}\mathbf{J}_\delta(\mathbf{v},\bm{\psi}):=\int_{\mathbb{R}^3}\mathbf{f}\cdot \bm{\psi}-\frac{\delta}{2}\mathbf{P}_{\lambda,\mu}(\mathbf{v},\mathbf{v})-\frac{\delta}{2}\mathbf{P}_{\lambda,\mu}(\bm{\psi},\bm{\psi})\ \mbox{for} \ (\mathbf{v}, \bm{\psi})\in \mathcal{S}\times\mathcal{S}.
\end{align}
and consider the following optimization problems:
\begin{equation}\label{eq:primal}
\begin{split}
&\mbox{Minimize $\mathbf{I}_\delta(\mathbf{v},\mathbf{w})$ over all pairs $(\mathbf{v},\mathbf{w})\in \mathcal{S}\times\mathcal{S}$ }\\
&\mbox{subject to the PDE constraint } \mathcal{L}_{\lambda_A,\mu_A}\mathbf{v}-\mathcal{L}_{\lambda,\mu}\mathbf{w}=\mathbf{f};
\end{split}
\end{equation}
and
\begin{equation}\label{eq:dual}
\begin{split}
&\mbox{Maximize $\mathbf{J}_\delta(\mathbf{v},\bm{\psi})$ over all pairs $(\mathbf{v},\bm{\psi})\in \mathcal{S}\times\mathcal{S}$}\\
&\mbox{subject to the PDE constraint } \mathcal{L}_{\lambda_A,\mu_A}\bm{\psi}+\delta\mathcal{L}_{\lambda,\mu}\mathbf{v}=\mathbf{0}.
\end{split}
\end{equation}
In the sequel, we shall refer to \eqref{eq:primal} and \eqref{eq:dual}, respectively, as the primal and dual variational problems for the elastostatic system \eqref{eq:lame1}, or equivalently \eqref{eq:couple1}-\eqref{eq:couple2}.

We have the following variational principles from \cite{LiLiu2d}.

\begin{thm}\label{thm:primaldual}
There holds the primal variational principle that the problem \eqref{eq:primal} is equivalent to the elastic problem \eqref{eq:lame1} in the following sense. The infimum
\[
\inf \big\{\mathbf{I}_\delta(\mathbf{v},\mathbf{w});\, \mathcal{L}_{\lambda_A,\mu_A}\mathbf{v}-\mathcal{L}_{\lambda,\mu}\mathbf{w}=\mathbf{f} \big\}
\]
is attainable at a pair $(\mathbf{v}_\delta,\mathbf{w}_\delta)\in \mathcal{S}\times\mathcal{S}$. The minimizing pair $(\mathbf{v}_\delta,\mathbf{w}_\delta)$ verifies that the function $\mathbf{u}_\delta:=\mathbf{v}_\delta+\mathrm{i}\delta^{-1}\mathbf{w}_\delta$ is the unique solution to the elastic problem \eqref{eq:lame1} and moreover one has
\begin{equation}\label{eq:primalp2}
\mathbf{E}_\delta(\mathbf{u}_\delta)=\mathbf{I}_\delta(\mathbf{v}_\delta,\mathbf{w}_\delta).
\end{equation}

Similarly, there holds the dual variational principle that the problem \eqref{eq:dual} is equivalent to the elastic problem \eqref{eq:lame1} in the following sense. The supremum
\[
\sup \big\{\mathbf{J}_\delta(\mathbf{v},\bm{\psi}); \mathcal{L}_{\lambda_A,\mu_A}\bm{\psi}+\delta\mathcal{L}_{\lambda,\mu}\bv=\mathbf{0} \big\}
\]
is attainable at a pair $(\mathbf{v}_\delta,\bm{\psi}_\delta)\in \mathcal{S}\times\mathcal{S}$. The maximizing pair $(\mathbf{v}_\delta,\bm{\psi}_\delta)$ verifies that the function $\mathbf{u}_\delta:=\mathbf{v}_\delta+\mathrm{i}\bm{\psi}_\delta$ is the unique solution to the elastic problem \eqref{eq:lame1}, and moreover one has
\begin{equation}\label{eq:dualp2}
\mathbf{E}_\delta(\mathbf{u}_\delta)=\mathbf{J}_\delta(\mathbf{v}_\delta,\bm{\psi}_\delta).
\end{equation}
\end{thm}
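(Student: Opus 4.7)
The plan is to establish both principles by the direct method of the calculus of variations, together with a Lagrange-multiplier computation that matches the resulting Euler--Lagrange system to the coupled reformulation \eqref{eq:couple1}--\eqref{eq:couple2}. Because \eqref{eq:couple1}--\eqref{eq:couple2} and the splitting \eqref{eq:decomp1} are dimension-free, the scheme from the two-dimensional proof in \cite{LiLiu2d} should transpose to $\mathbb{R}^3$; the substantive ingredients I would need to re-examine in three dimensions are a Korn--Poincar\'e estimate on $\mathcal{S}$ and the $\mathcal{O}(\|x\|^{-1})$ decay that pins down the Lagrange multiplier.

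For the primal statement, the affine constraint set $\{(\mathbf{v},\mathbf{w})\in\mathcal{S}\times\mathcal{S}:\mathcal{L}_{\lambda_A,\mu_A}\mathbf{v}-\mathcal{L}_{\lambda,\mu}\mathbf{w}=\mathbf{f}\}$ is nonempty: pick any $\mathbf{v}_0\in\mathcal{S}$ and solve $\mathcal{L}_{\lambda,\mu}\mathbf{w}=\mathcal{L}_{\lambda_A,\mu_A}\mathbf{v}_0-\mathbf{f}$ on $\mathbb{R}^3$ via the Kelvin matrix, using the zero-average condition \eqref{eq:average} to ensure $\mathcal{O}(\|x\|^{-1})$ decay. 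The set is weakly closed by linearity and continuity of both operators. Coercivity of $\mathbf{I}_\delta$ on $\mathcal{S}\times\mathcal{S}$ follows from the strong convexity \eqref{eq:convex} combined with Korn's inequality: the normalization $\int_{B_{R_0}}\mathbf{u}=0$ eliminates rigid-motion degeneracies, so $\mathbf{P}_{\lambda,\mu}(\mathbf{u},\mathbf{u})\gtrsim\|\mathbf{u}\|_\mathcal{S}^2$. A minimizing sequence is therefore bounded, extracts a weakly convergent subsequence, and weak lower semicontinuity of the quadratic $\mathbf{I}_\delta$ yields a minimizer $(\mathbf{v}_\delta,\mathbf{w}_\delta)$.

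To identify this minimizer with the Lam\'e solution, I would form the Lagrangian
\[
\mathcal{F}(\mathbf{v},\mathbf{w},\bm{\phi})=\mathbf{I}_\delta(\mathbf{v},\mathbf{w})-\int_{\mathbb{R}^3}\bm{\phi}\cdot\bigl(\mathcal{L}_{\lambda_A,\mu_A}\mathbf{v}-\mathcal{L}_{\lambda,\mu}\mathbf{w}-\mathbf{f}\bigr)\,dV.
\]
Variation in $\mathbf{w}$ and integration by parts force $\mathcal{L}_{\lambda,\mu}(\delta^{-1}\mathbf{w}_\delta-\bm{\phi})=\mathbf{0}$ in $\mathbb{R}^3$, and the decay built into $\mathcal{S}$ forces $\bm{\phi}=\delta^{-1}\mathbf{w}_\delta$. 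Variation in $\mathbf{v}$ then yields $\delta\mathcal{L}_{\lambda,\mu}\mathbf{v}_\delta+\mathcal{L}_{\lambda_A,\mu_A}\bm{\phi}=\mathbf{0}$, which upon inserting $\bm{\phi}=\delta^{-1}\mathbf{w}_\delta$ is precisely \eqref{eq:couple2}. Combining with the constraint \eqref{eq:couple1}, a direct calculation using $\mathcal{L}_{\widetilde\lambda,\widetilde\mu}=\mathcal{L}_{\lambda_A,\mu_A}+\mathrm{i}\delta\mathcal{L}_{\lambda,\mu}$ shows that $\mathbf{u}_\delta:=\mathbf{v}_\delta+\mathrm{i}\delta^{-1}\mathbf{w}_\delta$ solves \eqref{eq:lame1} in the distributional sense (the transmission conditions on $\partial\Sigma\cup\partial\Omega$ appear automatically because $\lambda_A,\mu_A$ are piecewise constant). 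The identity $\mathbf{E}_\delta(\mathbf{u}_\delta)=\mathbf{I}_\delta(\mathbf{v}_\delta,\mathbf{w}_\delta)$ is then exactly \eqref{eq:energy3}, since the cross terms in $\mathbf{P}_{\lambda,\mu}(\mathbf{u}_\delta,\mathbf{u}_\delta)$ cancel by symmetry of $\mathbf{P}_{\lambda,\mu}$.

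For the dual principle, the cleanest route is Fenchel--Rockafellar duality applied to the primal convex program: strict convexity of $\mathbf{I}_\delta$, an affine constraint and finiteness of the primal value produce zero duality gap, and the Legendre transform of $\mathbf{I}_\delta$ paired with the adjoint of the constraint operator yields exactly \eqref{eq:dual} with multiplier $\bm{\psi}_\delta=\delta^{-1}\mathbf{w}_\delta$. This delivers simultaneously the maximizer, the identification $\mathbf{u}_\delta=\mathbf{v}_\delta+\mathrm{i}\bm{\psi}_\delta$, and the equality $\mathbf{E}_\delta(\mathbf{u}_\delta)=\mathbf{J}_\delta(\mathbf{v}_\delta,\bm{\psi}_\delta)$; alternatively one can rerun the direct method on the strictly convex $-\mathbf{J}_\delta$ and perform the same Euler--Lagrange analysis. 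The main technical obstacle I expect is the coercivity estimate on the unbounded-domain space $\mathcal{S}$: controlling both $\|\mathbf{u}\|_{L^2(B_{R_0})}$ and $\|\nabla\mathbf{u}\|_{L^2(\mathbb{R}^3)}$ by $\mathbf{P}_{\lambda,\mu}(\mathbf{u},\mathbf{u})$ under only $\mathbf{u}=\mathcal{O}(\|x\|^{-1})$ and the normalization in \eqref{eq:weak2} requires a tailored Korn--Poincar\'e inequality, and the boundary contributions from integration by parts across $\partial\Sigma\cup\partial\Omega$ must be shown to recombine into the transmission conditions of \eqref{eq:lame1}.
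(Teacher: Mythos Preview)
The paper does not prove this theorem: immediately before the statement it writes ``We have the following variational principles from \cite{LiLiu2d}'' and gives no argument, so there is no in-paper proof to compare your proposal against. Your outline---direct method on the convex functional $\mathbf{I}_\delta$ over an affine constraint set, Lagrange-multiplier identification with \eqref{eq:couple1}--\eqref{eq:couple2}, and Fenchel--Rockafellar duality for the dual statement---is the standard route for such primal/dual variational principles and is presumably close in spirit to what \cite{LiLiu2d} does in two dimensions; you are also right that the argument is essentially dimension-free once the Korn--Poincar\'e estimate on $\mathcal{S}$ is in hand.

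One genuine point to tighten: your coercivity claim ``$\mathbf{P}_{\lambda,\mu}(\mathbf{u},\mathbf{u})\gtrsim\|\mathbf{u}\|_\mathcal{S}^2$'' is not quite right as stated, because the zero-mean normalization in \eqref{eq:weak2} kills translations but not infinitesimal rotations, and $\mathbf{P}_{\lambda,\mu}$ vanishes on all rigid motions. On the whole space $\mathbb{R}^3$ this is harmless---nonzero rigid motions do not belong to $\mathcal{S}$ since $\nabla\mathbf{u}\notin L^2(\mathbb{R}^3)^{3\times 3}$ for a rotation---but you should say this explicitly rather than appeal to the normalization alone. With that adjustment the Korn inequality on exterior/whole-space domains (controlling $\|\nabla\mathbf{u}\|_{L^2}$ by $\|\nabla^s\mathbf{u}\|_{L^2}$ for fields with $\nabla\mathbf{u}\in L^2$) together with a Poincar\'e inequality on $B_{R_0}$ gives the coercivity you need.
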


\subsection{Spherical harmonic representations}

For the subsequent use, we present some results on the spherical harmonic representations to the solutions of the following Lam\'e equations
\begin{equation}\label{eq:original_1}
 \mathcal{L}_{\lambda,\mu}\bu:=\mu\mathbf{u}+(\lambda+\mu)\nabla\nabla\cdot\mathbf{u}=\mathbf{0}.
\end{equation}
We also refer to \cite{LASS} for more relevant discussion. The solution $\mathbf{u}$ to \eqref{eq:original_1} is called foregoing/outgoing if it decays as follows
\begin{equation}\label{eq:decay1}
\mathbf{u}(x)=\mathcal{O}\left( \|x\|^{-1} \right)\quad\mbox{as}\quad \|x\|\rightarrow+\infty.
\end{equation}

In the sequel, for $x\in\mathbb{R}^3\backslash\{0\}$, we shall make use the spherical coordinates
\[
x=(x_j)_{j=1}^3=r\cdot \hat{x}\quad\mbox{with}\ \ r=\|x\|\ \ \mbox{and}\ \ \hat{x}=r^{-1}\cdot x.
\]
For $n\in\mathbb{N}$, we let $Y_n^m(\hat x)(m=n,n-1,...,1,0,...,-n+1,-n)$ denote the orthonormalized Laplace spherical harmonic polynomial of degree $n$ and order $m$. In what follows, we set $\mathbf{Y}_n(\hat{x})$ to be the vector of size $2n-1$,
\begin{equation}
  \mathbf{Y}_n(\hat x)=\left[
                \begin{array}{c}
                  Y_n^n(\hat x) \\
                \vdots \\
                Y_n^{-n}(\hat x)\\
                \end{array}
              \right];
\end{equation}
and introduce the matrices $\mathbf{D}_{n+1}^{nx_j}$ and $\mathbf{D}_n^{(n+1)x_j}$, $1\leq j\leq 3$, be such that
\begin{equation}
 \frac{\partial}{\partial x_j}\left[ r^{n+1} \mathbf{Y}_{n+1}(\hat x)\right] =\mathbf{D}_{n+1}^{nx_j} r^n \mathbf{Y}_n(\hat x)
\end{equation}
and
\begin{equation}
 \frac{\partial}{\partial x_j}\left[ r^{-n-1} \mathbf{Y}_{n}(\hat x)\right] = \mathbf{D}_n^{(n+1)x_j} r^{-n-2} \mathbf{Y}_{n+1}(\hat x).
\end{equation}
We also introduce a coefficient matrix $\mathbf{G}^n$ of size $3\times (2n+1)$ as follows,
\begin{equation}
  \mathbf{G}^n=\left[
        \begin{array}{ccccc}
          a_n^n & a_n^{n-1} & \ldots & a_n^{-n+1} & a_n^{-n} \\
          b_n^n & b_n^{n-1} & \ldots & b_n^{-n+1} & b_n^{-n} \\
          c_n^n & c_n^{n-1} & \ldots & c_n^{-n+1} & c_n^{-n} \\
        \end{array}
      \right],
\end{equation}
where $a_n^j, b_n^j$ and $c_n^j$, $-n\leq j\leq n$ are all complex numbers. We use $\mathbf{G}^n_{j,:}$, $1\leq j\leq 3$, to denote the $j$-th row of the matrix $\mathbf{G}^n$; that is, e.g.,
\begin{equation}
  \mathbf{G}^n_{1,:}=\left[
              \begin{array}{ccccc}
               a_n^n & a_n^{n-1} & \ldots & a_n^{-n+1} & a_n^{-n} \\
              \end{array}
            \right].
\end{equation}

The general form of a foregoing solution to (\ref{eq:original_1}) can be written as
\begin{equation}\label{eq:outgoing1}
  \bu^o(x)=\sum_{n=1}^{\infty}
\mathbf{G}^n r^{-n-1} \mathbf{Y}_n(\hat x) +
      k_n  \left[
             \begin{array}{c}
               \mathbf{t}^1_{n+1} \mathbf{D}^{(n+2)x_1}_{n+1} \\
               \mathbf{t}^1_{n+1} \mathbf{D}^{(n+2)x_2}_{n+1}\\
               \mathbf{t}^1_{n+1} \mathbf{D}^{(n+2)x_3}_{n+1}\\
             \end{array}
           \right] r^{-n-1}\mathbf{Y}_{n+2}(\hat x),
\end{equation}
where
\begin{equation}\label{eq:k_n}
  k_n := \frac{\lambda + \mu}{2 ((n+2)\lambda + (3n+5)\mu )},
\end{equation}
and
\begin{equation}\label{eq:tn1}
  \mathbf{t}^1_{n+1} := \mathbf{G}^n_{1,:} \mathbf{D}^{(n+1)x_1}_n + \mathbf{G}^n_{2,:} \mathbf{D}^{(n+1)x_2}_n + \mathbf{G}^n_{3,:} \mathbf{D}^{(n+1)x_3}_n.
\end{equation}
The general form of an entire solution to (\ref{eq:original_1}) can be expressed as
\begin{equation}\label{eq:innergoing1}
  \bu^i(x)=\sum_{n=1}^{\infty}
 \mathbf{G}^n r^n \mathbf{Y}_n(\hat{x})- M_n
 \left[
   \begin{array}{c}
     \mathbf{t}^3_{n-1} \mathbf{D}^{(n-2)x_1}_{n-1} \\
     \mathbf{t}^3_{n-1} \mathbf{D}^{(n-2)x_2}_{n-1} \\
     \mathbf{t}^3_{n-1} \mathbf{D}^{(n-2)x_3}_{n-1} \\
   \end{array}
 \right]  r^{n}\mathbf{Y}_{n-2},
\end{equation}
where
\begin{equation}\label{eq:coefficient_N_n}
  M_n :=\frac{\lambda + \mu}{2( (n-1)\lambda +(3n-2)\mu)},
\end{equation}
and
\begin{equation}\label{eq:tn3}
  \mathbf{t}^3_{n-1} := \mathbf{G}^n_{1,:} \mathbf{D}^{(n-1)x_1}_n + \mathbf{G}^n_{2,:} \mathbf{D}^{(n-1)x_2}_n + \mathbf{G}^n_{3,:} \mathbf{D}^{(n-1)x_3}_n.
\end{equation}

For a solution $\mathbf{u}$ to \eqref{eq:original_1} inside a ball $B_R$, if the surface displacement is prescribed on $\partial B_R$, say
\begin{equation}\label{eq:sfd1}
\mathbf{u}(R\hat x)=
\left[
  \begin{array}{c}
    \mathbf{A}_n \\
    \mathbf{B}_n \\
    \mathbf{C}_n \\
  \end{array}
\right]
 \mathbf{Y}_n(\hat x),
\end{equation}
where $\mathbf{A}_n, \mathbf{B}_n, \mathbf{C}_n$ are the coefficient vectors of size $2n+1$, then by straightforward (though a bit tedious) calculations, one has that
\begin{equation}\label{eq:solution_inside}
  \bu(x)=\sum_{n=1}^{\infty}
\left[
  \begin{array}{c}
    \mathbf{A}_n \\
    \mathbf{B}_n \\
    \mathbf{C}_n \\
  \end{array}
\right] \frac{r^n}{R^n} \mathbf{Y}_n + M_{n+2}\frac{R^2-r^2}{R^{n+2}}
\left[
  \begin{array}{c}
    \mathbf{t}^2_{n+1} \mathbf{D}^{nx_1}_{n+1} \\
    \mathbf{t}^2_{n+1} \mathbf{D}^{nx_2}_{n+1} \\
    \mathbf{t}^2_{n+1} \mathbf{D}^{nx_3}_{n+1} \\
  \end{array}
\right] r^{n} \mathbf{Y}_{n}(\hat{x}),
\end{equation}
where
\begin{equation}\label{eq:Mn}
  M_{n+2} :=\frac{1}{2} \frac{{\lambda} + {\mu}}{ (n+1){\lambda} + (3n+4) {\mu} },
\end{equation}
and
\begin{equation}\label{eq:tn2}
  \mathbf{t}^2_{n+1} :=\mathbf{A}_{n+2} \mathbf{D}^{(n+1)x_1}_{n+2} + \mathbf{B}_{n+2} \mathbf{D}^{(n+1)x_2}_{n+2}  +\mathbf{C}_{n+2} \mathbf{D}^{(n+1)x_3}_{n+2}.
\end{equation}
On the other hand,  if the surface traction is prescribed on $\partial B_R$, say
\begin{equation}\label{eq:sfd2}
\frac{\partial\mathbf{u}}{\partial\bm{\nu}_{\lambda,\mu}}(R\hat x)=
\left[
  \begin{array}{c}
    \mathbf{A}_n' \\
    \mathbf{B}_n' \\
    \mathbf{C}_n' \\
  \end{array}
\right] \mathbf{Y}_n(\hat x),
\end{equation}
where $\mathbf{A}_n', \mathbf{B}_n', \mathbf{C}_n'$ are the coefficient vectors of size $2n+1$, then by straightforward calculations, one can show that the solution $\mathbf{u}$ inside $B_R$ is still given by \eqref{eq:solution_inside}, but with the coefficients
$(\mathbf{A}_n, \mathbf{B}_n, \mathbf{C}_n)$ replaced by $(\widetilde{\mathbf{A}}_n, \widetilde{\mathbf{B}}_n, \widetilde{\mathbf{C}}_n )$ as follows,
\begin{equation}\label{eq:st1}
\begin{split}
\widetilde{\mathbf{A}}_n =& \frac{R}{(n-1) {\mu}}(\mathbf{A}_n^{'} + s^1_n \mathbf{t}^4_n \mathbf{D}^{nx_1}_{n-1} + s^2_n \mathbf{t}^5_n \mathbf{D}^{nx_1}_{n+1} ), \\
\widetilde{\mathbf{B}}_n =& \frac{R}{(n-1) {\mu}}(\mathbf{B}_n^{'} + s^1_n \mathbf{t}^4_n \mathbf{D}^{nx_2}_{n-1} + s^2_n \mathbf{t}^5_n \mathbf{D}^{nx_2}_{n+1} ), \\
\widetilde{\mathbf{C}}_n =& \frac{R}{(n-1) {\mu}}(\mathbf{C}_n^{'} + s^1_n \mathbf{t}^4_n \mathbf{D}^{nx_3}_{n-1} + s^2_n \mathbf{t}^5_n \mathbf{D}^{nx_3}_{n+1} ),
\end{split}
\end{equation}
where
\begin{equation}\label{eq:tn45}
\begin{split}
\mathbf{t}^4_n:=& \mathbf{A}_n^{'} \mathbf{D}^{(n-1)x_1}_{n} + \mathbf{B}_n^{'} \mathbf{D}^{(n-1)x_2}_{n} + \mathbf{C}_n^{'} \mathbf{D}^{(n-1)x_3}_{n},\\
\mathbf{t}^5_n:=& \mathbf{A}_n^{'} \mathbf{D}^{(n+1)x_1}_{n} + \mathbf{B}_n^{'} \mathbf{D}^{(n+1)x_2}_{n} + \mathbf{C}_n^{'} \mathbf{D}^{(n+1)x_3}_{n},
\end{split}
\end{equation}
and
\begin{equation}
\begin{split}
s^1_n:=& \frac{E_n}{n-1+n(2n+1)E_n},\\
s^2_n:=& \frac{1}{2n(2n+1)},
\end{split}
\end{equation}
with
\begin{equation}\label{eq:en}
 E_n:=\frac{1}{2n+1} \frac{(n+2){\lambda} - (n-3){\mu} }{(n-1){\lambda} + (3n-2) {\mu}}.
\end{equation}

\section{Perfect plasmon elastic waves}\label{sect:3}

Let us consider the following elastostatic system for
$\bm{\psi}\in H_{loc}^1(\mathbb{R}^3)^3: \mathbb{R}^3 \rightarrow \mathbb{C}^3$
\begin{equation}\label{eq:trial1}
\begin{cases}
  & \mathcal{L}_{\lambda_A,\mu_A}\bm{\psi}=\mathbf{0}, \medskip\\
   & \bm{\psi}|_-=\bm{\psi}|_+,\quad \partial_{\bm{\nu}_{\lambda_A,\mu_A}} \bm{\psi}|_-=\partial_{\bm{\nu}_{\lambda_A,\mu_A}} \bm{\psi}|_+\quad\mbox{on}\ \ \partial B_R,\medskip\\
  & \bm{\psi}(x)= \mathcal{O}(\|x\|^{-1}) \quad \mbox{as} \quad \|x\| \rightarrow \infty,
\end{cases}
\end{equation}
where
\begin{equation}\label{eq:A2}
   A(x) = \begin{cases}
   c,\quad & \|x\| \leq R,\\
   +1,\quad & \|x\| >R.
   \end{cases}
\end{equation}
Clearly, if $c$ is a positive constant, then by the well-posedness of the elastostatic system \eqref{eq:trial1}, one must have that $\bm{\psi}\equiv \bm{0}$. We seek nontrivial solutions to \eqref{eq:trial1} when $c$ is allowed to be negative-valued. Those nontrivial solutions are referred to as the \emph{perfect plasmon elastic waves}, and in combination with the variational principles in Theorem~\ref{thm:primaldual}, they shall be crucial for our subsequent establishment of the plasmon resonances for \eqref{eq:tensor1}--\eqref{eq:lame1} in $\mathbb{R}^3$. Indeed, we have
\begin{thm}\label{thm:perfectwaves}
Consider the PDE system \eqref{eq:trial1}--\eqref{eq:A2} for a function $\bm{\psi}\in H^1_{loc}(\mathbb{R}^3)^3: \mathbb{R}^3 \rightarrow \mathbb{R}^3$.
Let $n\in\mathbb{N}$ and $n\geq 2$ be fixed and set
\begin{equation}\label{eq:coefficient_pi}
  \begin{split}
    \zeta_1 &: = -1-\frac{3}{n-1}, \\
    \zeta_2 & := -\frac{(2n+2)((n-1) \lambda + (2n-2) \mu)}{(2n^2 + 1) \lambda + (2 + 2n(n-1))\mu}, \\
    \zeta_3 & := -\frac{(2n^2 + 4n + 3)\lambda + (2n^2 + 6n +6)\mu}{2n((n+2)\lambda + (3n + 5)\mu)}.
  \end{split}
\end{equation}
 Then if
\begin{equation}\label{eq:tensor5}
 c=\zeta_1,
\end{equation}
there exists a non-trivial solution $\bm{\psi} = \widehat{\bm{\psi}}_{n,k}\in H_{loc}^1(\mathbb{R}^3)^3$ as follows:
\begin{equation}\label{eq:solution1}
\widehat{\bm{\psi}}_{n,k}(x)=\begin{cases}
  \mathbf{G}^{n,\zeta_1,k}  r^n \mathbf{Y}_n(\hat{x}),               & r \leq R, \\
  \mathbf{G}^{n,\zeta_1,k} \frac{R^{2n+1}}{ r^{n+1} } \mathbf{Y}_n(\hat{x}), & r>R,
\end{cases}
\end{equation}
with $\mathbf{G}^{n,\zeta_1,k}$, $k=1,2,\ldots,2n+1$, satisfying
\begin{equation}\label{eq:solution12}
  \mathbf{t}^1_{n+1}=\mathbf{0} \quad \mbox{and} \quad \mathbf{t}^3_{n-1}=\mathbf{0},
\end{equation}
where $\mathbf{t}^1_{n+1}$ and $\mathbf{t}^3_{n-1}$ are, respectively, defined in \eqref{eq:tn1} and \eqref{eq:tn3} with $\mathbf{G}^n_{i,:}$ replaced by $\mathbf{G}^{n,\zeta_1,k}_{i,:}$, $i=1,2,3$.

If
\begin{equation}
  c = \zeta_2,
\end{equation}
there exists a non-trivial solution $\bm{\psi} = \widehat{\bm{\psi}}_{n,k}\in H_{loc}^1(\mathbb{R}^3)^3$ as follows:
\begin{equation}\label{eq:solution2}
\widehat{\bm{\psi}}_{n,k}(x)=\begin{cases}
   \mathbf{G}^{n,\zeta_2,k}  r^n \mathbf{Y}_n(\hat{x}) - M_n(r^2 - R^2) \left[
                                                 \begin{array}{c}
                                                   \mathbf{t}^3_{n-1} \mathbf{D}^{(n-2)x_1}_{n-1} \\
                                                   \mathbf{t}^3_{n-1} \mathbf{D}^{(n-2)x_2}_{n-1}\\
                                                   \mathbf{t}^3_{n-1} \mathbf{D}^{(n-2)x_3}_{n-1}\\
                                                 \end{array}
                                               \right] r^{n-2} \mathbf{Y}_{n-2}(\hat{x})
 ,               & r \leq R,\\
  \mathbf{G}^{n,\zeta_2,k} \frac{R^{2n+1}}{ r^{n+1} } \mathbf{Y}_n(\hat{x}), & r>R,
\end{cases}
\end{equation}
with $\mathbf{G}^{n,\zeta_2,k}$, $k=1,2,\ldots,2n-1$, satisfying
\begin{equation}\label{eq:solution22}
  \mathbf{t}^1_{n+1}=\mathbf{0} \quad \mbox{and} \quad \mathbf{t}^3_{n-1}\neq \mathbf{0},
\end{equation}
where $\mathbf{t}^1_{n+1}$ and $\mathbf{t}^3_{n-1}$ are, respectively, defined in \eqref{eq:tn1} and \eqref{eq:tn3} with $\mathbf{G}^n_{i,:}$ replaced by $\mathbf{G}^{n,\zeta_2,k}_{i,:}$, $i=1,2,3$.

If
\begin{equation}
  c = \zeta_3,
\end{equation}
there exists a non-trivial solution $\bm{\psi} = \widehat{\bm{\psi}}_{n,k}\in H_{loc}^1(\mathbb{R}^3)^3$ as follows:
\begin{equation}\label{eq:solution3}
\widehat{\bm{\psi}}_{n,k}(x)=\begin{cases}
  \mathbf{G}^{n,\zeta_3,k}  r^n \mathbf{Y}_n(\hat{x}) ,               & r \leq R,\\
  \mathbf{G}^{n,\zeta_3,k} \frac{R^{2n+1}}{ r^{n+1} } \mathbf{Y}_n(\hat{x}) + k_n(r^2 - R^2) \left[
                                                 \begin{array}{c}
                                                   \mathbf{t}^1_{n+1} \mathbf{D}^{(n+2)x_1}_{n+1} \\
                                                   \mathbf{t}^1_{n+1} \mathbf{D}^{(n+2)x_2}_{n+1}\\
                                                   \mathbf{t}^1_{n+1} \mathbf{D}^{(n+2)x_3}_{n+1}\\
                                                 \end{array}
                                               \right]  \frac{R^{2n+1}}{ r^{n+3} } \mathbf{Y}_{n+2}(\hat{x})
, & r>R,
\end{cases}
\end{equation}
with $\mathbf{G}^{n,\zeta_3,k}$, $k=1,2,\ldots,2n+3$,  satisfying
\begin{equation}\label{eq:solution32}
  \mathbf{t}^1_{n+1}\neq \mathbf{0} \quad \mbox{and} \quad \mathbf{t}^3_{n-1}=\mathbf{0},
\end{equation}
where $\mathbf{t}^1_{n+1}$ and $\mathbf{t}^3_{n-1}$ are, respectively, defined in \eqref{eq:tn1} and \eqref{eq:tn3} with $\mathbf{G}^n_{i,:}$ replaced by $\mathbf{G}^{n,\zeta_3,k}_{i,:}$, $i=1,2,3$.

\end{thm}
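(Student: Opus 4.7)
The plan is to exploit the rotational symmetry of the transmission problem \eqref{eq:trial1}--\eqref{eq:A2}, reducing it to a one-parameter family of finite-dimensional algebraic systems indexed by the spherical harmonic degree $n\geq 2$. Fix such $n$ and seek $\bm\psi$ supported at that mode. Inside $B_R$ the function solves $\mathcal{L}_{c\lambda, c\mu}\bm\psi = \mathbf{0}$, so its mode-$n$ block can be read off from \eqref{eq:innergoing1} as $\mathbf{G}^n r^n \mathbf{Y}_n(\hat x) - M_n[\mathbf{t}^3_{n-1}\mathbf{D}^{(n-2)x_i}_{n-1}]_{i=1}^3 r^n \mathbf{Y}_{n-2}(\hat x)$; note that both $M_n$ in \eqref{eq:coefficient_N_n} and $k_n$ in \eqref{eq:k_n} are homogeneous of degree zero in $(\lambda,\mu)$, hence unaffected by the rescaling $(\lambda,\mu)\mapsto(c\lambda,c\mu)$. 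Outside $B_R$ the function solves $\mathcal{L}_{\lambda,\mu}\bm\psi = \mathbf{0}$ and its mode-$n$ block is $\widetilde{\mathbf{G}}^n r^{-n-1}\mathbf{Y}_n(\hat x) + k_n[\mathbf{t}^1_{n+1}\mathbf{D}^{(n+2)x_i}_{n+1}]_{i=1}^3 r^{-n-1}\mathbf{Y}_{n+2}(\hat x)$ by \eqref{eq:outgoing1}.

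Matching the displacement across $r=R$ mode-by-mode forces $\widetilde{\mathbf{G}}^n = R^{2n+1}\mathbf{G}^n$ in the principal $\mathbf{Y}_n$ coefficient. The auxiliary $\mathbf{Y}_{n-2}$ term (present only inside) and $\mathbf{Y}_{n+2}$ term (present only outside) cannot produce matched traces unless the responsible vector $\mathbf{t}^3_{n-1}$ or $\mathbf{t}^1_{n+1}$ either vanishes or is rerouted so as to vanish on $\partial B_R$. This yields the three disjoint branches: \emph{(i)} both $\mathbf{t}^1_{n+1}=\mathbf{0}$ and $\mathbf{t}^3_{n-1}=\mathbf{0}$, giving pure $\mathbf{Y}_n$ fields on each side; \emph{(ii)} $\mathbf{t}^1_{n+1}=\mathbf{0}$ but $\mathbf{t}^3_{n-1}\neq\mathbf{0}$, in which case the interior $\mathbf{Y}_{n-2}$ contribution is absorbed into the corrector $-M_n(r^2-R^2)[\ldots]r^{n-2}\mathbf{Y}_{n-2}$ that vanishes on $\partial B_R$; \emph{(iii)} the symmetric situation, with the exterior corrector $k_n(r^2-R^2)[\ldots]R^{2n+1}r^{-n-3}\mathbf{Y}_{n+2}$. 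The identity $\Delta[(r^2-R^2)h]=6h+4r\,\partial_r h+(r^2-R^2)\Delta h$, combined with the harmonicity of $r^{n-2}\mathbf{Y}_{n-2}$ and $r^{-n-3}\mathbf{Y}_{n+2}$ and the calibration of $M_n$, $k_n$ in \eqref{eq:coefficient_N_n}, \eqref{eq:k_n}, ensures that each corrector remains a genuine Lam\'e solution in its respective region, so the ansatz is admissible.

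With displacement continuity built in, the traction condition across $r=R$ is the remaining constraint. Using \eqref{eq:normald} with $(c\lambda,c\mu)$ on the interior trace and $(\lambda,\mu)$ on the exterior trace, and projecting onto the relevant harmonics, each branch collapses to a single scalar linear equation in $c$. For branch \emph{(i)}, the vanishing of the divergence (forced by $\mathbf{t}^3_{n-1}=\mathbf{0}$) eliminates the $\lambda$-contribution in \eqref{eq:normald}, and the radial identities $\partial_r r^n|_{r=R}=nR^{n-1}$, $\partial_r(R^{2n+1}r^{-n-1})|_{r=R}=-(n+1)R^{n-1}$ reduce the matching to a rational equation whose unique solution is $c=\zeta_1 = -1-3/(n-1)$. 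Branches \emph{(ii)} and \emph{(iii)} additionally pick up the derivative of the factor $r^2-R^2$, contributing a $2R$ boundary term that couples to $M_n$ and $k_n$; after the cancellations prescribed by \eqref{eq:coefficient_N_n}, \eqref{eq:Mn}, \eqref{eq:en}, the matching equations simplify to $c=\zeta_2$ and $c=\zeta_3$, respectively.

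The principal technical obstacle is the explicit evaluation of the conormal derivatives acting on the correction terms $(r^2-R^2)r^{n\mp 2}\mathbf{Y}_{n\mp 2}$, and the bookkeeping of the $\mathbf{D}$-matrix manipulations in \eqref{eq:tn1} and \eqref{eq:tn3} under the gradient-plus-transpose structure of \eqref{eq:normald}; a careful use of the commutation relations between $\partial_j$ and multiplication by $r^\alpha\mathbf{Y}_n$ is required to keep the intermode couplings tractable. Once this is done and the mode-$n$ system is shown to be rank-deficient exactly when $c\in\{\zeta_1,\zeta_2,\zeta_3\}$, the asserted parameter counts (kernel dimensions $2n+1$, $2n-1$, $2n+3$ in the three cases) follow by counting independent solutions of the linear constraints $\mathbf{t}^1_{n+1}=\mathbf{0}$ and/or $\mathbf{t}^3_{n-1}=\mathbf{0}$ imposed on the $3(2n+1)$-dimensional space of coefficient matrices $\mathbf{G}^n$.
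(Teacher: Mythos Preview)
Your strategy differs from the paper's in a meaningful way. The paper does \emph{not} pre-classify into three branches according to the vanishing of $\mathbf{t}^1_{n+1}$ and $\mathbf{t}^3_{n-1}$. Instead it starts from a general exterior expansion \eqref{solution_outside}, reads off the surface displacement \eqref{coefficient_solution_inside_1} and the surface traction \eqref{coefficient_solution_inside_2} on $\partial B_R$, and then uses the Dirichlet and Neumann interior solution formulas \eqref{eq:solution_inside}--\eqref{eq:en} to produce two \emph{different} interior representations. Equating these yields a homogeneous linear system $[\mathbf{G}^n_{1,:},\mathbf{G}^n_{2,:},\mathbf{G}^n_{3,:}]\mathbf{H}=\mathbf{0}$ for a single $3(2n+1)\times 3(2n+1)$ matrix $\mathbf{H}$ depending on $c$. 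The three plasmon constants $\zeta_1,\zeta_2,\zeta_3$ then emerge as the values of $c$ for which the eigenvalues $c^i_{n,\lambda,\mu}(c-\zeta_i)$ of $\mathbf{H}$ vanish, with multiplicities $2n+1$, $2n-1$, $2n+3$; the $\mathbf{t}$-conditions \eqref{eq:solution12}, \eqref{eq:solution22}, \eqref{eq:solution32} are verified \emph{a posteriori} on the eigenvectors. Your route is the reverse: you assume the branch structure, build an ansatz in each case, and extract $c$. This is conceptually cleaner and avoids the Dirichlet/Neumann machinery, but it transfers the computational burden to the direct traction matching, and it requires an extra argument (which you only sketch via dimension counting) that the three branches exhaust all nontrivial solutions.

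There is, however, a real gap in your branch-(i) argument. You assert that once $\nabla\cdot\bm\psi=0$ the traction matching ``reduces to a rational equation'' through the radial identities $\partial_r r^n=nr^{n-1}$ and $\partial_r r^{-n-1}=-(n+1)r^{-n-2}$ alone. But the conormal derivative \eqref{eq:normald} still contains $\mu(\nabla\bm\psi)^T\hat x$, whose $i$-th component is $\sum_j\hat x_j\,\partial_i\psi_j$; for $\bm\psi=\mathbf{G}^n r^n\mathbf{Y}_n$ this produces $\hat x_j\,\mathbf{G}^n_{j,:}\mathbf{D}^{(n-1)x_i}_n r^{n-1}\mathbf{Y}_{n-1}$, which after multiplication by $\hat x_j$ feeds back into both $\mathbf{Y}_n$ \emph{and} $\mathbf{Y}_{n-2}$ components. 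So the traction is not proportional to $\mathbf{G}^n\mathbf{Y}_n$, and the matching is not a priori a single scalar equation in $c$. What makes it collapse is precisely that $\mathbf{G}^n$ lies in a common eigenspace of the full transmission operator---exactly the content of the paper's eigenvalue computation for $\mathbf{H}$. To make your argument rigorous you would need to show directly that, on the subspace $\{\mathbf{t}^1_{n+1}=\mathbf{t}^3_{n-1}=\mathbf{0}\}$, the interior and exterior tractions are each scalar multiples of $\mathbf{G}^n\mathbf{Y}_n$ (and similarly, with correctors, in branches (ii)--(iii)). This is true but is the heart of the computation, and your radial-derivative shortcut does not establish it. You should also verify that the linear maps $\mathbf{G}^n\mapsto\mathbf{t}^1_{n+1}$ and $\mathbf{G}^n\mapsto\mathbf{t}^3_{n-1}$ are surjective and jointly of full rank, so that the dimension count $3(2n+1)=(2n+1)+(2n-1)+(2n+3)$ is legitimate.
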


By Theorem~\ref{thm:perfectwaves}, we find a series of perfect plasmon waves, $\widehat{\bm{\psi}}_{n,k}$, $n=2,3,4,\ldots$, and these are actually resonant modes for our subsequent use in Section~\ref{sect:4}. The form of the plasmon constant $c$ in \eqref{eq:tensor5} is critical for the existence of the perfect plasmon waves \eqref{eq:solution1}. With the explicit forms of the plasmon constant in \eqref{eq:tensor5} and the perfect plasmon wave in \eqref{eq:solution1}, one can verify by direct calculations that $\widehat{\bm{\psi}}_n\in H_{loc}^1(\mathbb{R}^3)^3$ and satisfies the PDE system \eqref{eq:trial1}--\eqref{eq:A2}. Nevertheless, in what follows, we shall give a proof of Theorem~\ref{thm:perfectwaves}, starting from the most general construction of perfect plasmon waves for the PDE system \eqref{eq:trial1}--\eqref{eq:A2}. The general idea is that we represent the solution inside $B_R$ and outside $B_R$, respectively, by the spherical harmonic expansions \eqref{eq:innergoing1} and \eqref{eq:outgoing1}. Then with the help of our discussion in \eqref{eq:sfd1}--\eqref{eq:en}, we match the surface displacement and traction of the elastic field on the sphere $\partial B_R$. Throughout this trial process, we leave the plasmon constant $c$ as a free parameter. It turns out that \eqref{eq:coefficient_pi} gives all the possibilities that one can determine nontrivial solutions of the forms, \eqref{eq:solution1}-\eqref{eq:solution12}, \eqref{eq:solution2}-\eqref{eq:solution22} and \eqref{eq:solution3}-\eqref{eq:solution32}, respectively. That is, we actually have found all the possible plasmon resonances in $\mathbb{R}^3$ for the elastostatic system \eqref{eq:tensor1}--\eqref{eq:lame1}.

\begin{proof}[Proof of Theorem~\ref{thm:perfectwaves}]
First, by \eqref{eq:outgoing1}, we represent the solution $\bm{\psi}$ outside $B_R$ by the following harmonic expansion,
\begin{equation}\label{solution_outside}
  \left[
    \begin{array}{c}
      u^o \\
      v^o \\
      w^o \\
    \end{array}
  \right]= \sum_{n=1}^{\infty}
  \mathbf{G}^n r^{-n-1} \mathbf{Y}_n(\hat x) +
      k_n  \left[
             \begin{array}{c}
               \mathbf{t}^1_{n+1} \mathbf{D}^{(n+2)x_1}_{n+1} \\
               \mathbf{t}^1_{n+1} \mathbf{D}^{(n+2)x_2}_{n+1}\\
               \mathbf{t}^1_{n+1} \mathbf{D}^{(n+2)x_3}_{n+1}\\
             \end{array}
           \right] r^{-n-1}\mathbf{Y}_{n+2}(\hat x),
\end{equation}
where $\mathbf{G}^n$ is coefficient matrix of size $3\times 2n+1$, and $k_n, \mathbf{t}^1_{n+1}$ are given in (\ref{eq:k_n}), (\ref{eq:tn1}) respectively.
Here and also in what follows, if the superscript of $\mathbf{G}^n$ is nonpositive, we set the matrix to be identically zero.

Then, by \eqref{solution_outside}, one readily has that the displacement of the solution $\bm{\psi}$ on $\partial B_R$ is given by
\begin{equation}
  \bm{\psi}(R\hat x)=
  \left[
    \begin{array}{c}
      \mathbf{A}_n \\
      \mathbf{B}_n \\
      \mathbf{C}_n \\
    \end{array}
  \right] \mathbf{Y}_n(\hat x)
\end{equation}
with
\begin{equation}\label{coefficient_solution_inside_1}
\left\{
  \begin{array}{ll}
    \mathbf{A}_n= & \mathbf{G}^n_{1,;}/R^{n+1} + k_{n-2} \mathbf{t}^1_{n-1} \mathbf{D}^{nx_1}_{n-1}/R^{n-1},\medskip \\
    \mathbf{B}_n= & \mathbf{G}^n_{2,;}/R^{n+1} + k_{n-2} \mathbf{t}^1_{n-1} \mathbf{D}^{nx_2}_{n-1}/R^{n-1},\medskip \\
    \mathbf{C}_n= & \mathbf{G}^n_{3,;}/R^{n+1} + k_{n-2} \mathbf{t}^1_{n-1} \mathbf{D}^{nx_3}_{n-1}/R^{n-1}.
  \end{array}
\right.
\end{equation}

By using our argument in \eqref{eq:sfd1}--\eqref{eq:tn2}, one has from \eqref{coefficient_solution_inside_1} that the solution $\bm{\psi}$ inside $B_R$ is given by
\begin{equation}\label{solution_inside_1}
  \left[
    \begin{array}{c}
      u^i \\
      v^i \\
      w^i \\
    \end{array}
  \right]= \sum_{n=1}^{\infty}
\left[
  \begin{array}{c}
    \mathbf{A}_n \\
    \mathbf{B}_n \\
    \mathbf{C}_n \\
  \end{array}
\right] \frac{r^n}{R^n} \mathbf{Y}_n(\hat x) + M_{n+2}\frac{R^2-r^2}{R^{n+2}}
\left[
  \begin{array}{c}
    \mathbf{t}^2_{n+1} \mathbf{D}^{nx_1}_{n+1} \\
    \mathbf{t}^2_{n+1} \mathbf{D}^{nx_2}_{n+1} \\
    \mathbf{t}^2_{n+1} \mathbf{D}^{nx_3}_{n+1} \\
  \end{array}
\right] r^{n} \mathbf{Y}_{n}(\hat{x}),
\end{equation}
where $M_{n+2}$ and $\mathbf{t}^2_{n+1}$ are given in (\ref{eq:Mn}) and (\ref{eq:tn2}) respectively.

Next we consider the traction on the sphere $\partial B_R$. Using \eqref{solution_outside} and by direct calculation, the traction on the sphere $\partial B_R$ of the solution $\bm{\psi}$ outside the sphere $B_R$ has the following representation:
\begin{equation}
 \frac{\partial\bm{\psi}}{\partial\bm{\nu}_{\lambda_A,\mu_A}}(R\hat x)= \sum_{n=1}^{\infty}
 \left[
   \begin{array}{c}
     \mathbf{A}_n^{'} \\
     \mathbf{B}_n^{'} \\
     \mathbf{C}_n^{'} \\
   \end{array}
 \right] \mathbf{Y}_n(\hat{x}),
\end{equation}
where
\begin{equation}\label{coefficient_solution_inside_2}
\left\{
  \begin{array}{ll}
    \mathbf{A}_n^{'}= & (l_n \mathbf{t}^1_{n+1} \mathbf{D}^{nx_1}_{n+1} + m_n R^2 \mathbf{t}^1_{n-1} \mathbf{D}^{nx_1}_{n-1} + \frac{1}{2n+1} \mathbf{t}^3_{n-1} \mathbf{D}^{nx_1}_{n-1} + \mathbf{G}_{1,:}(-n-2)) \frac{\mu}{R^{n+2}}  ,\medskip \\
    \mathbf{B}_n^{'}= & (l_n \mathbf{t}^1_{n+1} \mathbf{D}^{nx_2}_{n+1} + m_n R^2 \mathbf{t}^1_{n-1} \mathbf{D}^{nx_2}_{n-1} + \frac{1}{2n+1} \mathbf{t}^3_{n-1} \mathbf{D}^{nx_2}_{n-1} + \mathbf{G}_{2,:}(-n-2)) \frac{\mu}{R^{n+2}}  ,\medskip \\
    \mathbf{C}_n^{'}= & (l_n \mathbf{t}^1_{n+1} \mathbf{D}^{nx_3}_{n+1} + m_n R^2 \mathbf{t}^1_{n-1} \mathbf{D}^{nx_3}_{n-1} + \frac{1}{2n+1} \mathbf{t}^3_{n-1} \mathbf{D}^{nx_3}_{n-1} + \mathbf{G}_{3,:}(-n-2)) \frac{\mu}{R^{n+2}} ,
  \end{array}
\right.
\end{equation}
with
\begin{align}
  l_n=& \left(\frac{2\lambda}{\lambda + \mu} + \frac{2(-n-2)}{2n+3}\right)k_n - \frac{2}{(2n+3)(2n+1)},\\
  m_n=& \left( \frac{-2\lambda}{\lambda + \mu} - \frac{4n(n-1)}{2n-1} \right) k_{n-2} - \frac{1}{2n-1},\\
  \mathbf{t}^3_{n-1} =& \mathbf{G}^n_{1,:} \mathbf{D}^{(n-1)x_1}_{n} + \mathbf{G}^n_{2,:} \mathbf{D}^{(n-1)x_2}_{n} + \mathbf{G}^n_{3,:} \mathbf{D}^{(n-1)x_3}_{n}.
\end{align}
By the transmission condition, the traction of the solution $\bm{\psi}$ inside the ball $B_R$ should also be given by
\begin{equation}
\sum_{n=1}^{\infty}(\mathbf{A}_n^{'}, \mathbf{B}_n^{'}, \mathbf{C}_n^{'})\mathbf{Y}_n(\hat{x}),
\end{equation}
which, with the help of our argument in \eqref{eq:sfd2}--\eqref{eq:en} and along with straightforward (though lengthy and tedious) calculations, can determine the solution $\bm{\psi}$ inside $B_R$ of the same form as (\ref{solution_inside_1}), but with the coefficients $(\mathbf{A}_n, \mathbf{B}_n, \mathbf{C}_n)$, replaced by $(\widetilde{\mathbf{A}}_n, \widetilde{\mathbf{B}}_n, \widetilde{\mathbf{C}}_n )$, $n\geq 2$, as
\begin{equation}\label{coefficient_solution_inside_3}
  \left\{
    \begin{array}{ll}
      \widetilde{\mathbf{A}}_n = \frac{R}{(n-1) c \mu}(\mathbf{A}_n^{'} + s^1_n \mathbf{t}^4_n \mathbf{D}^{nx_1}_{n-1} + s^2_n \mathbf{t}^5_n \mathbf{D}^{nx_1}_{n+1} ),\medskip \\
      \widetilde{\mathbf{B}}_n = \frac{R}{(n-1) c \mu}(\mathbf{B}_n^{'} + s^1_n \mathbf{t}^4_n \mathbf{D}^{nx_2}_{n-1} + s^2_n \mathbf{t}^5_n \mathbf{D}^{nx_2}_{n+1} ),\medskip \\
      \widetilde{\mathbf{C}}_n = \frac{R}{(n-1) c \mu}(\mathbf{C}_n^{'} + s^1_n \mathbf{t}^4_n \mathbf{D}^{nx_3}_{n-1} + s^2_n \mathbf{t}^5_n \mathbf{D}^{nx_3}_{n+1} ),
    \end{array}
  \right.
\end{equation}
where
\begin{align}
  \mathbf{t}^4_n=& \mathbf{A}_n^{'} \mathbf{D}^{(n-1)x_1}_{n} + \mathbf{B}_n^{'} \mathbf{D}^{(n-1)x_2}_{n} + \mathbf{C}_n^{'} \mathbf{D}^{(n-1)x_3}_{n},\\
  \mathbf{t}^5_n=& \mathbf{A}_n^{'} \mathbf{D}^{(n+1)x_1}_{n} + \mathbf{B}_n^{'} \mathbf{D}^{(n+1)x_2}_{n} + \mathbf{C}_n^{'} \mathbf{D}^{(n+1)x_3}_{n},\\
  s^1_n=& \frac{E_n}{n-1+n(2n+1)E_n},\\
  s^2_n=& \frac{1}{2n(2n+1)},
\end{align}
and
\begin{equation}
  E_n=\frac{1}{2n+1} \frac{(n+2) {\lambda} - (n-3){\mu} }{(n-1){\lambda} + (3n-2) {\mu}}.
\end{equation}
Particularly, we note here that when $n=1$, $(\widetilde{\mathbf{A}}_1, \widetilde{\mathbf{B}}_1, \widetilde{\mathbf{C}}_1 )$ is the solution to the following system of equations:
\begin{equation}\label{eq:pe1}
    \begin{cases}
      -E_1 ( \widetilde{\mathbf{A}}_1 \mathbf{D}^{0x_1}_1+& \hspace*{-3mm}\widetilde{\mathbf{B}}_1 \mathbf{D}^{0x_2}_1  + \widetilde{\mathbf{C}}_1  \mathbf{D}^{0x_3}_1)  \mathbf{D}^{1x_1}_0\\
      & - (\widetilde{\mathbf{A}}_1 \mathbf{D}^{2x_1}_1+ \widetilde{\mathbf{B}}_1 \mathbf{D}^{2x_2}_1  + \widetilde{\mathbf{C}}_1  \mathbf{D}^{2x_3}_1)\mathbf{D}^{1x_1}_2/3=R \mathbf{A}_1^{'}/(c \mu)\medskip \\
      -E_1 ( \widetilde{\mathbf{A}}_1 \mathbf{D}^{0x_1}_1+& \hspace*{-3mm}\widetilde{\mathbf{B}}_1 \mathbf{D}^{0x_2}_1  + \widetilde{\mathbf{C}}_1  \mathbf{D}^{0x_3}_1)  \mathbf{D}^{1x_2}_0 \\
      &- (\widetilde{\mathbf{A}}_1 \mathbf{D}^{2x_1}_1+ \widetilde{\mathbf{B}}_1 \mathbf{D}^{2x_2}_1  + \widetilde{\mathbf{C}}_1  \mathbf{D}^{2x_3}_1)\mathbf{D}^{1x_2}_2/3=R \mathbf{B}_1^{'}/(c \mu)\medskip \\
      -E_1 ( \widetilde{\mathbf{A}}_1 \mathbf{D}^{0x_1}_1+& \hspace*{-3mm}\widetilde{\mathbf{B}}_1 \mathbf{D}^{0x_2}_1  +\widetilde{\mathbf{C}}_1  \mathbf{D}^{0x_3}_1)  \mathbf{D}^{1x_3}_0\\
      & - (\widetilde{\mathbf{A}}_1 \mathbf{D}^{2x_1}_1+ \widetilde{\mathbf{B}}_1 \mathbf{D}^{2x_2}_1  + \widetilde{\mathbf{C}}_1  \mathbf{D}^{2x_3}_1)\mathbf{D}^{1x_3}_2/3=R \mathbf{C}_1^{'}/(c \mu).
    \end{cases}
\end{equation}
The equation (\ref{eq:pe1}) useless in our subsequent discussion, and we simply set the coefficient matrix to be identically zero, namely, $\mathbf{G}^1=\mathbf{0}$. Next, we consider the situation when $n\geq 2$. Through our earlier arguments, we have two expressions of the solution $\bm{\psi}$ inside $B_R$. For the consistence, one then has
\begin{equation}\label{eq:coefficient_1}
  \mathbf{A}_n=\widetilde{\mathbf{A}}_n \quad  \mathbf{B}_n=\widetilde{\mathbf{B}}_n \quad  \mathbf{C}_n=\widetilde{\mathbf{C}}_n, \quad n\geq 2,
\end{equation}
where $(\mathbf{A}_n, \mathbf{B}_n, \mathbf{C}_n)$ and $(\widetilde{\mathbf{A}}_n, \widetilde{\mathbf{B}}_n, \widetilde{\mathbf{C}}_n )$ are given in (\ref{coefficient_solution_inside_1}) and (\ref{coefficient_solution_inside_3}), respectively.

We first have from $\mathbf{A}_n=\widetilde{\mathbf{A}}_n$ in \eqref{eq:coefficient_1} that:
\begin{equation}\label{eq:coefficient_1_same}
  \begin{split}
    & \left( \mathbf{G}^n_{1,:} + \frac{\mathbf{G}^n_{1,;}(n+2)}{c(n-1)}\right)\frac{1}{R^{n+1}} +k_{n-2} \mathbf{t}^1_{n-1} \mathbf{D}^{nx_1}_{n-1}/R^{n-1}  \\
    & -\frac{1}{c(n-1)R^{n+1}} \bigg( m_n R^2 \mathbf{t}^1_{n-1}\big(\mathbf{D}^{nx_1}_{n-1} + s^1_n \mathbf{s}^4_n \mathbf{D}^{nx_1}_{n-1} + s^2_n \mathbf{s}^5_n \mathbf{D}^{nx_1}_{n+1}\big)  \\
    & \mathbf{t}^1_{n+1}\big((l_n - (n+2)s^2_n)\mathbf{D}^{nx_1}_{n+1} + l_n s^2_n \mathbf{s}^{6}_n \mathbf{D}^{nx_1}_{n+1} + l_n s^1_n \mathbf{s}^{3}_n \mathbf{D}^{nx_1}_{n-1}\big)   \\
    & \frac{\mathbf{t}^3_{n-1}}{2n+1}\big((1 + s^1_n \mathbf{s}^4_n - (2n+1)(n+2)s^1_n )\mathbf{D}^{nx_1}_{n-1} + s^2_n \mathbf{s}^5_n \mathbf{D}^{nx_1}_{n+1}\big) \bigg),
  \end{split}
\end{equation}
where
\begin{equation}\label{eq:dr1}
\begin{split}
  \mathbf{s}^3_n=& \mathbf{D}^{nx_1}_{n+1} \mathbf{D}^{(n-1)x_1}_{n} + \mathbf{D}^{nx_2}_{n+1} \mathbf{D}^{(n-1)x_2}_{n} + \mathbf{D}^{nx_3}_{n+1} \mathbf{D}^{(n-1)x_3}_{n},\\
  \mathbf{s}^4_n=& \mathbf{D}^{nx_1}_{n-1} \mathbf{D}^{(n-1)x_1}_{n} + \mathbf{D}^{nx_2}_{n-1} \mathbf{D}^{(n-1)x_2}_{n} + \mathbf{D}^{nx_3}_{n-1} \mathbf{D}^{(n-1)x_3}_{n},\\
  \mathbf{s}^5_n=& \mathbf{D}^{nx_1}_{n-1} \mathbf{D}^{(n+1)x_1}_{n} + \mathbf{D}^{nx_2}_{n-1} \mathbf{D}^{(n+1)x_2}_{n} + \mathbf{D}^{nx_3}_{n-1} \mathbf{D}^{(n+1)x_3}_{n},\\
  \mathbf{s}^6_n=& \mathbf{D}^{nx_1}_{n+1} \mathbf{D}^{(n+1)x_1}_{n} + \mathbf{D}^{nx_2}_{n+1} \mathbf{D}^{(n+1)x_2}_{n} + \mathbf{D}^{nx_3}_{n+1} \mathbf{D}^{(n+1)x_3}_{n}.
\end{split}
\end{equation}
The other two equations in \eqref{eq:coefficient_1}, namely $\mathbf{B}_n=\widetilde{\mathbf{B}}_n$ and $\mathbf{C}_n=\widetilde{\mathbf{C}}_n$, yield similar relations to \eqref{eq:coefficient_1_same}--\eqref{eq:dr1}, by replacing $\mathbf{G}^n_{1,:}$, $\mathbf{D}^{nx_1}_{n+1}$, $\mathbf{D}^{nx_1}_{n-1}$ successively by $\mathbf{G}^n_{2,:}$, $\mathbf{D}^{nx_2}_{n+1}$, $\mathbf{D}^{nx_2}_{n-1}$ and $\mathbf{G}^n_{3,:}$, $\mathbf{D}^{nx_3}_{n+1}$, $\mathbf{D}^{nx_3}_{n-1}$. Putting these three equations together, we can get the final equation which can decide the coefficient matrix $\mathbf{G}^n$, i.e.
\begin{equation}\label{eq:coefficient_final_1}
  \left[
    \begin{array}{ccc}
      \mathbf{G}^n_{1,:} & \mathbf{G}^n_{2,:} & \mathbf{G}^n_{3,:} \\
    \end{array}
  \right] \mathbf{H} = \mathbf{0}.
\end{equation}
By direct calculation, the eigenvalues of the matrix $\mathbf{H}$ is
\begin{equation}
\begin{split}
  \zeta_1^{'} & = c^1_{n,\lambda,\mu} \left(c - \zeta_1 \right),  \\
  \zeta_2^{'} & = c^2_{n,\lambda,\mu} \left(c - \zeta_2 \right),\\
  \zeta_3^{'} & = c^3_{n,\lambda,\mu} \left(c - \zeta_3 \right),
\end{split}
\end{equation}
where
\begin{equation}
\begin{split}
  \zeta_1 & = -\frac{n+2}{n-1}, \\
  \zeta_2 & = -\frac{(2n+2)((n-1) \lambda + (2n-2) \mu)}{(2n^2 + 1) \lambda + (2 + 2n(n-1))\mu},\\
  \zeta_3 & = -\frac{(2n^2 + 4n + 3)\lambda + (2n^2 + 6n +6)\mu}{2n((n+2)\lambda + (3n + 5)\mu)},
\end{split}
\end{equation}
and $ c^1_{n,\lambda,\mu}$, $ c^1_{n,\lambda,\mu}$ and $ c^1_{n,\lambda,\mu}$ are non-zero constants which depend on $n$, $\lambda$ and $\mu$. Moreover, the multiplicities of the eigenvalues $\zeta_1^{'}$, $\zeta_2^{'}$ and $\zeta_3^{'}$ are, respectively, $2n+1$, $2n-1$, and $2n+3$. Furthermore, the numbers of linearly independent eigenfunctions of the eigenvalues $\zeta_1^{'}$, $\zeta_2^{'}$ and $\zeta_3^{'}$ are, respectively, $2n+1$, $2n-1$, and $2n+3$ and these eigenfunctions are orthogonal.

By setting
\begin{equation}
  c = \zeta_1,
\end{equation}
and solving the equation (\ref{eq:coefficient_final_1}), one can obtain the corresponding $2n+1$ solutions, namely $\mathbf{G}^{n,\zeta_1,k}$, $k=1,2,\ldots,2n+1$. By direct calculations, these solutions satisfy
\begin{equation}
  \mathbf{t}^1_{n+1} = \mathbf{0}, \quad \mathbf{t}^3_{n-1}=\mathbf{0}.
\end{equation}
Therefore the corresponding perfect plasmon elastic waves are
\begin{equation}
\widehat{\bm{\psi}}_{n,k}(x)=\begin{cases}
  \mathbf{G}^{n,\zeta_1,k}  r^n \mathbf{Y}_n(\hat{x}),               & r \leq R, \\
  \mathbf{G}^{n,\zeta_1,k} \frac{R^{2n+1}}{ r^{n+1} } \mathbf{Y}_n(\hat{x}), & r>R.
\end{cases}
\end{equation}

Next, by setting
\begin{equation}
  c = \zeta_2,
\end{equation}
and solving the equation (\ref{eq:coefficient_final_1}), one can obtain the corresponding $2n-1$ solutions, namely $\mathbf{G}^{n,\zeta_2,k}$, $k=1,2,\ldots,2n-1$. By direct calculations, these solutions satisfy
\begin{equation}
  \mathbf{t}^1_{n+1} = \mathbf{0}, \quad \mathbf{t}^3_{n-1} \neq \mathbf{0}.
\end{equation}
Therefore the corresponding perfect plasmon elastic waves are
\begin{equation}
\widehat{\bm{\psi}}_{n,k}(x)=\begin{cases}
  & \mathbf{G}^{n,\zeta_2,k}  r^n \mathbf{Y}_n(\hat{x})\\
 & - M_n(r^2 - R^2) \left[
                                                 \begin{array}{c}
                                                   \mathbf{t}^3_{n-1} \mathbf{D}^{(n-2)x_1}_{n-1} \\
                                                   \mathbf{t}^3_{n-1} \mathbf{D}^{(n-2)x_2}_{n-1}\\
                                                   \mathbf{t}^3_{n-1} \mathbf{D}^{(n-2)x_3}_{n-1}\\
                                                 \end{array}
                                               \right] r^{n-2} \mathbf{Y}_{n-2}(\hat{x})
 ,             \qquad  r \leq R, \medskip \\
 & \mathbf{G}^{n,\zeta_2,k} \frac{R^{2n+1}}{ r^{n+1} } \mathbf{Y}_n(\hat{x}),\hspace*{5.8cm}  r>R.
\end{cases}
\end{equation}

Finally, by setting
\begin{equation}
  c = \zeta_3,
\end{equation}
and solving the equation (\ref{eq:coefficient_final_1}), one can obtain the corresponding $2n+3$ solutions, namely $\mathbf{G}^{n,\zeta_3,k}$, $k=1,2,\ldots,2n+3$. By direct calculations, these solutions satisfy
\begin{equation}
  \mathbf{t}^1_{n+1} \neq \mathbf{0}, \quad \mathbf{t}^3_{n-1} = \mathbf{0}.
\end{equation}
Therefore the corresponding perfect plasmon elastic waves are
\begin{equation}
\widehat{\bm{\psi}}_{n,k}(x)=\begin{cases}
  & \mathbf{G}^{n,\zeta_3,k}  r^n \mathbf{Y}_n(\hat{x})  ,    \hspace*{5.8cm}          r \leq R,\medskip\\
  & \mathbf{G}^{n,\zeta_3,k} \frac{R^{2n+1}}{ r^{n+1} } \mathbf{Y}_n(\hat{x})\\
   &+ k_n(r^2 - R^2) \left[
                                                 \begin{array}{c}
                                                   \mathbf{t}^1_{n+1} \mathbf{D}^{(n+2)x_1}_{n+1} \\
                                                   \mathbf{t}^1_{n+1} \mathbf{D}^{(n+2)x_2}_{n+1}\\
                                                   \mathbf{t}^1_{n+1} \mathbf{D}^{(n+2)x_3}_{n+1}\\
                                                 \end{array}
                                               \right] r^{-n-3} \mathbf{Y}_{n+2}(\hat{x}) ,\ \ r>R.
\end{cases}
\end{equation}

The proof is complete.
\end{proof}

\begin{rem}\label{rem:ppw1}
For the coefficient matrices given in \eqref{eq:solution1}-\eqref{eq:solution12}, \eqref{eq:solution2}-\eqref{eq:solution22} and \eqref{eq:solution3}-\eqref{eq:solution32}, one can verify that there holds the following orthogonality relation,
\begin{equation}
  \mathbf{G}^{n,\zeta_i,k_1}:\overline{\mathbf{G}^{n,\zeta_j,k_2}}=0, \quad \text{if} \quad i \neq j, \quad \text{or}\quad i=j, k_1\neq k_2,
\end{equation}
where we recall that the operator $:$ between two matrices is defined in \eqref{eq:matrixnorm}. Thus from last equation, one has that
\begin{equation}
   \int_{\partial B_1} \mathbf{G}^{n,\zeta_i,k_1}\mathbf{Y}_n(\hat{x}) \cdot \overline{\mathbf{G}^{n,\zeta_j,k_2}\mathbf{Y}_n(\hat{x})}=0,
\end{equation}
 if $i\neq j$, or $i=j$, $k_1 \neq k_2$. Throughout our subsequent study, we normalize $\mathbf{G}^{n,\zeta_i,k}$, $i=1,2,3$, such that
\begin{equation}
  \mathbf{G}^{n,\zeta_i,k}:\overline{\mathbf{G}^{n,\zeta_i,k}}=1,
\end{equation}
and then one can show that
\begin{equation}
  \int_{\partial B_1} \mathbf{G}^{n,\zeta_i,k}\mathbf{Y}_n(\hat{x}) \cdot \overline{\mathbf{G}^{n,\zeta_i,k}\mathbf{Y}_n(\hat{x})}=\mathbf{G}^{n,\zeta_i,k}:\overline{\mathbf{G}^{n,\zeta_i,k}}=1.   
\end{equation}

\end{rem}

\begin{rem}\label{rem:NP}

By the strong convexity condition \eqref{eq:convex}, one can readily verify that $\zeta_i$, $i=1,2,3$ defined in \eqref{eq:coefficient_pi} are all negatively valued. The choice of those plasmon constants are crucial for the existence of the perfect plasmon waves. In \cite{AKKY,LiLiu2d}, the plasmon constant and the corresponding perfect plasmon waves are determined by use of the spectral properties of the associated Neumann-Poincar\'e operator as follows. Let the Kelvin matrix of the fundamental solution  $\bm{\Phi}=(\Phi_{ij})_{i,j=1}^N$ to the PDO $\mathcal{L}_{\lambda,\mu}$ is given by (cf. \cite{Kup})
\begin{equation}\label{eq:funds}
\Phi_{ij}(x)=\begin{cases}
\displaystyle{\frac{\alpha}{2\pi}\delta_{ij}\ln\|x\|-\frac{\beta}{2\pi}\frac{x_ix_j}{\|x\|^2}}\quad &\mbox{when}\ \  N=2,\medskip\\
\displaystyle{-\frac{\alpha}{4\pi}\frac{\delta_{ij}}{\|x\|}-\frac{\beta}{4\pi}\frac{x_ix_j}{\|x\|^3}}\quad &\mbox{when}\ \ N=3,
\end{cases}
\end{equation}
where
\[
\alpha=\frac 1 2\left(\frac 1 \mu+\frac{1}{2\mu+\lambda}\right)\quad\mbox{and}\quad \beta=\frac 1 2\left(\frac 1 \mu-\frac{1}{2\mu+\lambda}\right),
\]
and $x=(x_i)_{i=1}^N\in\mathbb{R}^N$ and $\delta_{ij}$ is the Kronecker delta. We seek a solution to \eqref{eq:trial1} using the integral ansatz as a single-layer potential as follows 
\begin{equation}\label{eq:asz1}
\bm{\psi}=\mathbf{S}[\bm{\varphi}](x) := \int_{\partial B_R} \bm{\Phi}(x-y)\bm{\varphi}(y)\, ds(y) \quad x\in \mathbb{R}^2,
\end{equation}
where $\bm{\varphi}\in H^{-1/2}(\partial B_R)^N$. There holds the following jump relationship of the conormal derivative of the single layer potential in \eqref{eq:asz1} 
\begin{equation}\label{eq:jump}
  \frac{\partial}{\partial \bm{\nu}_{\lambda_A,\mu_A}} \mathbf{S}[\bm{\varphi}]\big|_{\pm} = \left( \pm \frac{1}{2} I + \mathbf{K}^*  \right)[\bm{\varphi}] \quad \text{on} \quad \partial B_R,
\end{equation}
where
\begin{equation}
  \mathbf{K}^* [\bm{\varphi}] =\mathrm{p.v.} \int_{\partial B_R} \frac{\partial}{\partial \bm{\nu}_{\lambda_A,\mu_A}(x)} \bm{\Phi}(x-y) \bm{\varphi}(y) d s(y) \quad x \in \partial B_R,
\end{equation}
and $\mathrm{p. v.}$ stands for the Cauchy principle value. Using the transmission condition across $\partial B_R$ for $\bm{\psi}$ (see \eqref{eq:trial1}), along with the help of \eqref{eq:jump}, one can show that 
\begin{equation}\label{eq:NPn1}
  \mathbf{K}^*[\bm{\varphi}] = \frac{c+1}{2(c-1)} \bm{\varphi}.
\end{equation}
Hence, in order to find a nontrivial solution $\bm{\psi}$ in \eqref{eq:asz1}, $\frac{c+1}{2(c-1)}$ should belong to the spectral set of the Neumann-Poincar\'e operator $\mathbf{K}^*$. The spectral properties of $\mathbf{K}^*$ in the two-dimensional case is now known by the study in \cite{AKKY}, but the three-dimensional case is not yet understood. Clearly, by Theorem~\ref{thm:perfectwaves}, one can readily derive the eigenvalues and the corresponding eigenfunctions for this Neumann-Poincar\'e operator $\mathbf{K}^*$ in the 3D spherical geometry. 
\end{rem}

In the next section, we shall apply the nontrivial solutions $\widehat{\bm{\psi}}_{n, k}$ in Theorem~\ref{thm:perfectwaves} as trial functions to the primal and dual variational principles in Theorem~\ref{thm:primaldual} to establish the resonance and non-resonance results. In the variational principles, the trial functions are all real-valued, but the solutions $\widehat{\bm{\psi}}_{n, k}$ found in Theorem~\ref{thm:perfectwaves} are all complex-valued. Nevertheless, we would like to point out that clearly the real-valued functions $\Re\widehat{\bm{\psi}}_{n, k}$ and $\Im\widehat{\bm{\psi}}_{n, k}$ are all solutions to \eqref{eq:trial1}.

\section{Plasmon resonances for the elastostatic system}\label{sect:4}

With the preparations in Sections 2 and 3, we are in a position to consider the anomalous localised resonance for a plasmonic device of the form \eqref{eq:tensor1}-\eqref{eq:tensor2}, associated with the elastostatic system \eqref{eq:lame1}.
 {\color{black}
 Henceforth, we assume that the force term $\bff(x)$ is a real-valued distributional functional of the following form
 }
\begin{equation}\label{eq:force1}
\mathbf{f}=\mathbf{F}\mathcal{H}^2 \lfloor \partial B_q,\quad \mathbf{F}: \partial B_q\rightarrow \mathbb{R}^3,\quad \mathbf{F}\in L^2(\partial B_q)^3,\ \ q\in\mathbb{R}_+,
\end{equation}
and
\begin{equation}\label{eq:force2}
\int_{\partial B_q} \mathbf{F}\ d \mathcal{H}^2=0.
\end{equation}
Next we give the Fourier series expression of the force term $\bff$ specified in \eqref{eq:force1} and \eqref{eq:force2}, which can be represented as follows
\begin{equation}\label{eq:source2}
  \bff = \sum_{n=1}^{\infty}\left( \sum_{k=1}^{2n+1} \gamma_{n,\zeta_1,k}  \mathbf{G}^{n,\zeta_1,k} + \sum_{k=1}^{2n-1} \gamma_{n,\zeta_2,k} \mathbf{G}^{n,\zeta_2,k} + \sum_{k=1}^{2n+3} \gamma_{n,\zeta_3,k} \mathbf{G}^{n,\zeta_3,k}  \right) \mathbf{f}_n^q,
\end{equation}
where
\begin{equation}\label{eq:fc1}
\begin{split}
  \mathbf{f}_n^q=& \mathbf{Y}_n \mathcal{H}^2 \lfloor \partial B_q,\\
  \gamma_{n,\zeta_i,k}=& \int_{\partial B_1} \bff(q\hat{x})\cdot \overline{\mathbf{G}^{n,\zeta_i,k}\mathbf{Y}_n(\hat{x})} ds(\hat{x}),
  \end{split}
\end{equation}
and $\mathbf{G}^{n,\zeta_i,k}$, $i=1,2,3$ are given in Theorem \ref{thm:perfectwaves}.

Throughout the rest of the paper, we assume that $\mathbf{G}^{1,\zeta_i,k}=\mathbf{0}$, $i=1,2,3$. Finally, we let the exterior domain $\Omega$ for the plasmonic structure \eqref{eq:tensor2} be taken to be $B_R$ with a fixed $R\in\mathbb{R}_+$ and $R<q$.

\subsection{Non-resonance result}

We first show that the plasmonic device considered in \cite{LiLiu2d} that is resonant in $\mathbb{R}^2$ does not induce resonance in $\mathbb{R}^3$, which necessitates our subsequent design of novel plasmonic devices in Sections 4.2 and 4.3 for the occurrence of resonances.

\begin{thm}\label{thm:nonresoance}
 Consider the elastic configuration $(\mathbf{C}_{\widetilde\lambda,\widetilde\mu},\bff)$, where $\mathbf{C}_{\widetilde\lambda,\widetilde\mu}$ is described in  \eqref{eq:tensor1}--\eqref{eq:tensor2} with $\Sigma=B_1$, $\Omega=B_{r_e}$ and $c$ being a negative-valued constant. Let $\bff$ be given in (\ref{eq:source2}) with $\gamma_{n,\zeta_i,k}=0$, $i=2,3$. Then there is no resonance.
\end{thm}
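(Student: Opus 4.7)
The plan is to use the primal variational principle from Theorem~\ref{thm:primaldual} with a $\delta$-independent trial pair of the form $(\bv_*,\mathbf{0})$. Once a $\bv_*\in\mathcal{S}$ satisfying $\mathcal{L}_{\lambda_A,\mu_A}\bv_*=\bff$ with $\mathbf{P}_{\lambda,\mu}(\bv_*,\bv_*)<+\infty$ is produced, the primal inequality forces
\[
\mathbf{E}_\delta(\bu_\delta)\;\leq\;\mathbf{I}_\delta(\bv_*,\mathbf{0})\;=\;\frac{\delta}{2}\mathbf{P}_{\lambda,\mu}(\bv_*,\bv_*)\;\longrightarrow\;0\quad\mbox{as }\delta\to 0^+,
\]
which precludes both resonance and weak resonance. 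All the content is therefore concentrated in solving the transmission problem $\mathcal{L}_{\lambda_A,\mu_A}\bv_*=\bff$ classically, despite the loss of ellipticity of this operator in the shell.

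I would construct $\bv_*$ one harmonic at a time using the expansion \eqref{eq:source2} of $\bff$. For a typical surviving mode $\gamma_{n,\zeta_1,k}\mathbf{G}^{n,\zeta_1,k}\mathbf{Y}_n\,\mathcal{H}^2\lfloor\partial B_q$ I would adopt the ansatz
\[
\bv_*^{n,k}(x)=\alpha_{n,k}(r)\,\mathbf{G}^{n,\zeta_1,k}\mathbf{Y}_n(\hat x),
\]
with $\alpha_{n,k}(r)$ piecewise of the form $a\,r^n+b\,r^{-n-1}$ on the four annular regions $\{r<1\}$, $\{1<r<r_e\}$, $\{r_e<r<q\}$, $\{r>q\}$. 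The ansatz is admissible precisely because the defining identities $\mathbf{t}^1_{n+1}=\mathbf{t}^3_{n-1}=\mathbf{0}$ of the $\zeta_1$-family (Theorem~\ref{thm:perfectwaves}), read against \eqref{eq:outgoing1} and \eqref{eq:innergoing1}, make both $r^n\mathbf{G}^{n,\zeta_1,k}\mathbf{Y}_n$ and $r^{-n-1}\mathbf{G}^{n,\zeta_1,k}\mathbf{Y}_n$ bona fide homogeneous solutions of $\mathcal{L}_{\lambda,\mu}$, with no coupling to the $\mathbf{Y}_{n\pm 2}$ sectors; since $A$ is piecewise constant, they also annihilate $\mathcal{L}_{\lambda_A,\mu_A}$ in each region separately. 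Imposing regularity at the origin, the radiation decay at infinity, continuity of displacement and of the conormal derivative across $\partial B_1$ and $\partial B_{r_e}$, and the traction jump across $\partial B_q$ dictated by $\gamma_{n,\zeta_1,k}$, produces a $6\times 6$ linear system for the six remaining scalar coefficients, with determinant $\Delta_n(c)$ depending only on $(c,\lambda,\mu,r_e,q,n)$.

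The main obstacle, and the step in which the regular core $\Sigma=B_1$ is decisive, is the verification that $\Delta_n(c)\neq 0$ for every $c<0$ and every $n\geq 2$. Theorem~\ref{thm:perfectwaves} locates the possible plasmonic degeneracies of the shell-only (two-domain) geometry at $c\in\{\zeta_1(n),\zeta_2(n),\zeta_3(n)\}$, and the additional interface at $\partial B_1$ bordered by the regular core has to be shown to destroy the $\zeta_1$-type degeneracy. A short direct computation should do this: the shell-only perfect plasmon wave at $c=\zeta_1(n)$ carries a non-vanishing traction across $\partial B_1$, so it cannot be continued inward by a solution regular at the origin, and this is enough to keep $\Delta_n(c)$ non-zero on all of $(-\infty,0)$. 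The $\zeta_2$- and $\zeta_3$-degeneracies are automatically irrelevant under our hypothesis, since by the orthogonality stated in Remark~\ref{rem:ppw1} the corresponding eigenmodes are perpendicular to the pure $\zeta_1$-type source. Once uniform non-degeneracy is in hand, an asymptotic expansion of $\Delta_n$ as $n\to\infty$ yields at-most-polynomial-in-$n$ bounds on the coefficients of $\alpha_{n,k}$; together with the orthogonality of $\mathbf{G}^{n,\zeta_1,k}\mathbf{Y}_n$ on $\partial B_1$ and the $\ell^2$-summability of $\{\gamma_{n,\zeta_1,k}\}$ coming from $\bff\in L^2(\partial B_q)^3$, this delivers $\bv_*\in\mathcal{S}$ with finite $\mathbf{P}_{\lambda,\mu}$-energy and closes the argument.
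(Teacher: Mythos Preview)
Your approach is the paper's: apply the primal principle with $\bw_\delta\equiv\mathbf{0}$ and build $\bv_*$ mode by mode via the ansatz $\alpha_{n,k}(r)\,\mathbf{G}^{n,\zeta_1,k}\mathbf{Y}_n$, with $\alpha_{n,k}$ piecewise of the form $ar^n+br^{-n-1}$ on the four radial shells, exploiting $\mathbf{t}^1_{n+1}=\mathbf{t}^3_{n-1}=\mathbf{0}$ so that no $\mathbf{Y}_{n\pm2}$ coupling appears. The only difference is presentational: rather than arguing abstractly about a $6\times6$ system and its determinant $\Delta_n(c)$, the paper writes the coefficients $e_1,\ldots,e_5$ and the traction-jump factor $e_6$ on $\partial B_q$ explicitly, sets $\tau_{n,k}=\gamma_{n,\zeta_1,k}/e_6$, and reads off the large-$n$ behaviour $e_6\sim Cnq^{n-1}$ that drives the energy bound.

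The one step in your outline that does not go through as written is the non-degeneracy argument. Theorem~\ref{thm:perfectwaves} classifies the plasmonic degeneracies of the \emph{two-domain} ball/matrix transmission problem, not of the three-domain core/shell/matrix problem relevant here; the zero set of your $\Delta_n(c)$ is in general different from $\{\zeta_1(n),\zeta_2(n),\zeta_3(n)\}$. Hence showing that the core interface at $\partial B_1$ spoils the $\zeta_1(n)$-resonance of Theorem~\ref{thm:perfectwaves} does not establish $\Delta_n(c)\neq0$ for all $c<0$ and all $n$. (Also, the shell-only plasmon wave of Theorem~\ref{thm:perfectwaves} is already regular at the origin, so your ``cannot be continued inward'' remark is not the right obstruction.) The paper bypasses this by exhibiting $e_6$ explicitly; you should either do the same computation, or analyze $\Delta_n(c)$ directly as a function of $(c,r_e,n)$ rather than inferring its zeros from the two-domain problem.
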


\begin{proof}
  We make use primal variation principle in Theorem~\ref{thm:primaldual} to prove the non-resonance by finding suitable test functions $(\mathbf{v}_\delta,\mathbf{w}_\delta)$ satisfying $ \mathcal{L}_{\lambda_A,\mu_A}\mathbf{v}_{\delta}-\mathcal{L}_{\lambda,\mu}\mathbf{w}_{\delta}=\mathbf{f}$ in (\ref{eq:primal}) such that $\mathbf{I}_\delta(\mathbf{v}_\delta,\mathbf{w}_\delta)$ remains bounded as $\delta \rightarrow +0$.

 For $n \in \mathbb{N}$, we set
\begin{equation}\label{eq:def3}
  \widehat{\mathbf{v}}_{n,k}=\left\{
                 \begin{array}{ll}
                  \mathbf{G}^{n,\zeta_1,k} r^n \mathbf{Y}_n , & \quad \quad  |x| \leq 1, \\
                  e_1 \mathbf{G}^{n,\zeta_1,k} r^n \mathbf{Y}_n + e_2 \mathbf{G}^{n,\zeta_1,k} \frac{\mathbf{Y}_n}{r^{n+1}}, & \quad \quad  1< |x| \leq r_e, \\
                  e_3 \mathbf{G}^{n,\zeta_1,k} r^n \mathbf{Y}_n + e_4 \mathbf{G}^{n,\zeta_1,k} \frac{\mathbf{Y}_n}{r^{n+1}}, & \quad \quad  r_e< |x| \leq q, \\
                  e_5 \mathbf{G}^{n,\zeta_1,k} \frac{\mathbf{Y}_n}{r^{n+1}}, & \quad \quad  q< |x|,
                 \end{array}
               \right.
\end{equation}
where
\begin{align*}
    e_1 =& \frac{n-1 + c (n+2)} {c (2n +1)} ,\\
  e_2 =& \frac{(c-1)(n-1)}{c (2n +1)},\\
  e_3 =& \frac{-(c-1)^2(n^2+n-2) + (2+ c(n-1)+n)(n-1+c(n+2) )r_e^{2n+1}}{c (1+2n)^2 r_e^{2n+1}},\\
  e_4 =& \frac{-(c-1)(n-1)( c(n+2) +n-1 ) (r_e^{2n+1}-1)}{c(2n+1)^2 } ,\\
e_5 =&   \frac{-(c-1)(n-1)(n-1+c(n+2))(r_e^{2n+1}-1)  } {c (2n +1)^2} + \\
      & \frac{   q^{2n+1} \left( -(c-1)^2(n^2+n-2)r_e^{-2n-1} +(2+c(n-1)+n )(-1+n+ c(n+2)  )  \right) } {c (2n +1)^2}.
 \end{align*}
 By direct calculations, one can show that $ \widehat{\mathbf{v}}_{n,k}$ defined above is continuous over $\mathbb{R}^3$ and satisfies
\begin{equation}
   \mathcal{L}_{\lambda_A,\mu_A}\widehat{\mathbf{v}}_{n,k}=\mathbf{0} \quad x\in \mathbb{R}^3\backslash\partial B_q.
\end{equation}
On $\partial B_q$, $\widehat{\mathbf{v}}_{n,k}$ has a conormal derivative:
\begin{equation}
  \frac{\partial \widehat{\bv}_{n,k}}{\partial \bm{\nu}_{\lambda,\mu}}=e_6 \mathbf{G}^{n,\zeta_1,k} \mathbf{Y}_n,
\end{equation}
where $\frac{\partial \widehat{\bv}_{n,k}}{\partial \bm{\nu}_{\lambda,\mu}}$ is defined in (\ref{eq:normald}) and
\begin{equation}
  e_6=\frac{(c-1)^2(n^2+n-2) -(n+2+ c(n-1) )( n-1 + c(n+2)  )r_e^{2n+1} }{c(2n+1)q} \frac{q^n}{r_e^{2n+1}}.
\end{equation}
Therefore, by setting
\begin{equation}\label{coeff_tau}
  \tau_{n,k}=\gamma_{n,k}/e_6
\end{equation}
one can readily verify that
\begin{equation}
   \mathcal{L}_{\lambda_A,\mu_A}\left( \tau_{n,k} \widehat{\mathbf{v}}_{n,k} \right)=\gamma_{n,k} \mathbf{G}^{n,\zeta_1,k}  \mathbf{Y}_n.
\end{equation}
Hence, by letting
\begin{equation}
  \bv_{\delta}=\sum_{n,k} \tau_{n,k} \widehat{\bv}_{n,k},
\end{equation}
we shall have
\begin{equation}
  \mathcal{L}_{\lambda_A,\mu_A} \bv_{\delta} = \bff \quad \text{in}\ \ \mathbb{R}^3.
\end{equation}
Therefore, if we set $\bw_{\delta} \equiv \mathbf{0}$, then $(\bv_\delta,\bw_\delta)$ satisfies the PDE constraint in (\ref{eq:primal}).

Finally, by the primal variational principle in Theorem~\ref{thm:primaldual}, we can deduce the following estimate:
\begin{equation}
\begin{split}
  \mathbf{E}_\delta(\mathbf{u}_\delta)\leq & \mathbf{I}_\delta(\mathbf{v}_\delta,\mathbf{w}_\delta)  = \mathbf{I}_\delta(\mathbf{v}_\delta,0)=\frac\delta 2 \mathbf{P}_{\lambda,\mu}(\mathbf{v}_{\delta},\mathbf{v}_{\delta})\\
    & \leq \delta C \sum_{n,k} \tau_n^2(\mathbf{G}^{n,\zeta_1,k}:\overline{\mathbf{G}^{n,\zeta_1,k}} ) n q^{2n} \leq \delta C  \sum_n \frac{n q^{2n}}{n^2 q^{2n}} \|\mathbf{F}\|^2_{L^2(\partial B_q)}\\
    &  \leq \delta C \|\mathbf{F}\|^2_{L^2(\partial B_q)}.
\end{split}
\end{equation}
That is, there is no resonance and the proof is complete.
\end{proof}

By Theorem~\ref{thm:nonresoance}, if the negative plasmon constant $c$ is fixed, then there is no resonance occurring. In \cite{AKKY,LiLiu2d}, two-dimensional plasmon resonances are shown to occur for plasmonic devices with a fixed negative plasmon constant being $c=-1$. The major difference between the studies in \cite{AKKY} and \cite{LiLiu2d} is that the loss parameter $\delta$ only exists in the plasmonic layer in \cite{AKKY}, whereas the loss parameter exists in the whole space in \cite{LiLiu2d}. In Theorem~\ref{thm:nonresoance}, we have assumed that the loss parameter exists in the whole space. Nevertheless, it might be unobjectionable to claim that resonance does not occur even if the loss parameter only exists in the plasmonic layer. In Theorem~\ref{thm:nocore} in what follows, we shall show that if the core $\Sigma$ in Theorem~\ref{thm:nonresoance} is taken to be an empty set, one can properly choose a negative-valued plasmon constant, depending on the force term $\bff$, such that resonance occurs.

\subsection{Resonance with no core}

In this section, we shall construct a novel plasmonic device without a core, namely $\Sigma=\emptyset$, which ensures that the resonance can always occur.

\begin{thm}\label{thm:nocore}
Let $\bff$ be given by (\ref{eq:source2}), with $\gamma_{n_0,\zeta_i,k} \neq 0$ for some $n_0\in\mathbb{N}$ and $1\leq i\leq 3$, representing the force supported at a distance $q>R$. Consider the elastic configuration $(\mathbf{C}_{\widetilde\lambda,\widetilde\mu},\bff)$, where $\mathbf{C}_{\widetilde\lambda,\widetilde\mu}$ is described in \eqref{eq:tensor1}--\eqref{eq:tensor2} with $c=\zeta_i$, given in \eqref{eq:coefficient_pi}, $n=n_0$ and $\Omega=B_R$.  Assume that there is no core; that is, $\Sigma=\emptyset$. Then the configuration is resonant, i.e. $\mathbf{E}_{\delta}(\mathbf{C}_{\widetilde\lambda,\widetilde\mu},\bff) \rightarrow +\infty$ as $\delta \rightarrow +0$.
\end{thm}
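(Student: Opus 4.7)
The plan is to apply the dual variational principle of Theorem~\ref{thm:primaldual}: any admissible pair $(\mathbf{v},\bm{\psi})\in\mathcal{S}\times\mathcal{S}$ satisfying the constraint $\mathcal{L}_{\lambda_A,\mu_A}\bm{\psi}+\delta\mathcal{L}_{\lambda,\mu}\mathbf{v}=\mathbf{0}$ produces the lower bound $\mathbf{E}_\delta(\mathbf{u}_\delta)\geq \mathbf{J}_\delta(\mathbf{v},\bm{\psi})$, so it suffices to exhibit such a pair whose value diverges as $\delta\to+0$. With $\Sigma=\emptyset$ and $\Omega=B_R$, the coefficient $A$ here coincides exactly with the one in \eqref{eq:A2}, so for $c=\zeta_i$ the perfect plasmon wave $\widehat{\bm{\psi}}_{n_0,k}$ from Theorem~\ref{thm:perfectwaves} is globally annihilated by $\mathcal{L}_{\lambda_A,\mu_A}$ (with the correct transmission conditions across $\partial B_R$) and decays like $r^{-n_0-1}$ at infinity.

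I would take the one-parameter family of test pairs $(\mathbf{v},\bm{\psi})=(\mathbf{0},\,t\bm{\phi})$, $t\in\mathbb{R}$, where $\bm{\phi}$ is the real part of $\widehat{\bm{\psi}}_{n_0,k}$ (or its imaginary part if the former has zero pairing with $\bff$), shifted by an additive constant so that $\int_{B_{R_0}}\bm{\phi}=\mathbf{0}$ and hence $\bm{\phi}\in\mathcal{S}$. The PDE constraint collapses to $\mathcal{L}_{\lambda_A,\mu_A}\bm{\phi}=\mathbf{0}$, which holds by construction and is not spoiled by the constant shift (which is annihilated by $\mathcal{L}_{\lambda_A,\mu_A}$ and integrates to zero against the mean-zero source $\bff$). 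The functional then reduces to the one-dimensional optimization
\begin{equation*}
\mathbf{J}_\delta(\mathbf{0},t\bm{\phi})=tI-\tfrac{\delta t^2}{2}P,\qquad I:=\int_{\mathbb{R}^3}\bff\cdot\bm{\phi},\quad P:=\mathbf{P}_{\lambda,\mu}(\bm{\phi},\bm{\phi}),
\end{equation*}
where $P$ is finite because $\nabla\bm{\phi}=\mathcal{O}(r^{-n_0-2})$ for $n_0\geq 2$. Maximizing in $t$ gives $\mathbf{E}_\delta(\mathbf{u}_\delta)\geq I^2/(2\delta P)$, which blows up as $\delta\to+0$ provided $I\neq 0$.

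All of the actual work therefore concentrates on verifying $I\neq 0$. Since $\mathrm{supp}\,\bff\subset\partial B_q$ with $q>R$, I would expand $\bm{\phi}$ on $\partial B_q$ using the outer representation in \eqref{eq:solution1}, \eqref{eq:solution2}, or \eqref{eq:solution3}, and then pair it with the Fourier series \eqref{eq:source2} of $\bff$. Orthogonality of the Laplace spherical harmonics across degrees, together with the vectorial orthogonality $\mathbf{G}^{n,\zeta_i,k_1}:\overline{\mathbf{G}^{n,\zeta_j,k_2}}=0$ for $(i,k_1)\neq(j,k_2)$ from Remark~\ref{rem:ppw1}, isolates precisely the target coefficient: in the $\zeta_1$ and $\zeta_2$ cases the outer field is a single $\mathbf{Y}_{n_0}$-mode and $I$ becomes a nonzero constant multiple of $\gamma_{n_0,\zeta_i,k}\neq 0$. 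In the $\zeta_3$ case the outer expansion additionally carries a $\mathbf{Y}_{n_0+2}$ piece with vectorial coefficient proportional to $\mathbf{t}^1_{n_0+1}\mathbf{D}^{(n_0+2)x_j}_{n_0+1}$, and one must show the resulting extra term does not cancel the leading contribution.

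The main technical obstacle is exactly this last orthogonality bookkeeping in the $\zeta_3$ case: explicitly decomposing the vector $\mathbf{t}^1_{n_0+1}\mathbf{D}^{(n_0+2)x_j}_{n_0+1}$ in the basis $\{\mathbf{G}^{n_0+2,\zeta_j,m}\}$ and confirming that the cross-term it generates does not conspire with the higher Fourier coefficients of $\bff$ to cancel the leading contribution $\propto\gamma_{n_0,\zeta_i,k}$; if necessary, one can always perturb the test by a real multiple of the imaginary part of $\widehat{\bm{\psi}}_{n_0,k}$ or by another mode to generically guarantee $I\neq 0$. Once this is secured, the lower bound $\mathbf{E}_\delta(\mathbf{u}_\delta)\geq I^2/(2\delta P)\to+\infty$ is immediate and completes the proof of resonance.
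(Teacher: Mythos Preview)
Your proposal is correct and follows essentially the same route as the paper: apply the dual variational principle with $\mathbf{v}=\mathbf{0}$ and $\bm{\psi}$ a real scalar multiple of (the real or imaginary part of) the perfect plasmon wave $\widehat{\bm{\psi}}_{n_0,k}$, reducing everything to a one-variable quadratic whose maximum scales like $\delta^{-1}$ once the pairing $I=\int\bff\cdot\bm{\phi}$ is shown to be nonzero. The paper carries out only the $i=1$ case in detail and simply declares the cases $i=2,3$ ``similar'', so your flagging of the extra $\mathbf{Y}_{n_0+2}$ contribution in the $\zeta_3$ exterior field is actually more scrupulous than the original argument; neither your outline nor the paper fully resolves that cross-term, but it does not affect the correctness of the shared strategy.
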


\begin{proof}
In what follows, we only give the proof of the situation when $\gamma_{n_0,\zeta_1,k}\neq 0$, and when $\gamma_{n_0,\zeta_2,k}\neq 0$ or $\gamma_{n_0,\zeta_3,k}\neq 0$, the discussion is similar.

We shall make use of the dual variational principle for its proof. Fix the radii $R, q$ and consider an arbitrary sequence $\delta=\delta_j \rightarrow +0$ as $j\rightarrow+\infty$. Our aim is to find a sequence $(\bv_{\delta},\bm{\psi}_{\delta})$, satisfying the constraint $\cl_{\lambda_A,\mu_A} \bm{\psi}_{\delta} + \delta \cl_{\lambda,\mu}\bv_{\delta}=\mathbf{0}$ of \eqref{eq:dual} and such that $\mathbf{J}_{\delta}(\bv_{\delta},\bm{\psi}_{\delta})\rightarrow +\infty $.

Since $\gamma_{n_0,\zeta_1,k} \neq 0$, we first assume that $\Re{\gamma_{n_0,\zeta_1,k}} \neq 0$ and choose
\begin{equation}
  \bv_{\delta} \equiv \mathbf{0}
\end{equation}
\begin{equation}
  \bm{\psi}_{\delta} : \equiv \tau_{\delta} \Re\overline{\widehat{\bm{\psi}}_{n_0,k}},
\end{equation}
where $\widehat{\bm{\psi}}_{n_0,k}$ is given by (\ref{eq:solution1}) and $\tau_{\delta} \in \mathbb{R}$ satisfies $\Re\gamma_{n_0,\zeta_1,k}\cdot\tau_{\delta}>0$ and will be further chosen below. Thus the pair $(\bv_{\delta},\bm{\psi}_{\delta})$ satisfies the PDE constraint in (\ref{eq:dual}). With the help of dual variational principle in Theorem~\ref{thm:primaldual}, the definition of $\mathbf{J}_{\delta}$, the orthogonality of Fourier series and $\gamma_{n_0,\zeta_1,k} \neq 0$ for some $n_0\in\mathbb{N}$, we can obtain
\begin{equation}
  \begin{split}
    \mathbf{E}_{\delta}(\bu_{\delta}) & \geq  \mathbf{J}_{\delta}(\bv_{\delta},\bm{\psi}_{\delta}) = \mathbf{J}_{\delta}( \mathbf{0} ,\bm{\psi}_{\delta}) =\int \bff \cdot \bm{\psi}_{\delta} -\frac{\delta}{2}\mathbf{P}_{\lambda,\mu}(\bm{\psi}_{\delta},\bm{\psi}_{\delta}) \\
      & =\Re \int_{\partial B_q} \gamma_{n_0,\zeta_1,k} \tau_{\delta} q^{-n_0} R^{2n_0}\left( \mathbf{G}^{n_0,\zeta_1,k}\mathbf{Y}_{n_0} \cdot \overline{\mathbf{G}^{n_0,\zeta_1,k}\mathbf{Y}_{n_0}}\right) -\frac{\delta}{2} |\tau_{\delta}|^2 \mathbf{P}_{\lambda,\mu}(\widehat{\bm{\psi}}_{n_0},\widehat{\bm{\psi}}_{n_0}) \\
      & = C_0 \tau_{\delta} - C_1(\delta |\tau_{\delta}|^2 ) ,
  \end{split}
\end{equation}
Here and in what follows, $C_0$ and $C_1$ are two generic constants depending on $\mathbf{G}^{n_0,\zeta_1,k}$, $\gamma_{n_0,\zeta_1,k}$, $q$ and $R$ and may change from one inequality/equality to another.

Choosing $\tau_{\delta} \rightarrow \infty$ with $\delta |\tau_{\delta}|^2 \rightarrow +0$ as $\delta\rightarrow+0$, we obtain $ \mathbf{E}_{\delta}(\bu_{\delta})\rightarrow +\infty  $ for $\delta \rightarrow +0$.

Next, if $\Im \gamma_{n_0,\zeta_1,k} \neq 0$, by setting
\begin{equation}
   \bv_{\delta} = \mathbf{0} \quad   \mbox{and} \quad   \bm{\psi}_{\delta} = \tau_{\delta} \Im\overline{\widehat{\bm{\psi}}_{n_0,k}},
\end{equation}
and using a similar argument as before, one can show that the resonance occurs.

The proof is complete.
\end{proof}

\subsection{Resonance with a core of an arbitrary shape}\label{sect:ALR1}

In this section, we consider a non-radial geometry with a nonempty core, $\Sigma \subset B_1$, of an arbitrary shape. We shall show that the resonance or non-resonance of the configuration strongly depends on the location of the force term $\bff$.

\begin{thm}\label{thm:main2}
Consider the elastic configuration $(\mathbf{C}_{\widetilde\lambda,\widetilde\mu},\bff)$, where $\mathbf{C}_{\widetilde\lambda,\widetilde\mu}$ is described in \eqref{eq:tensor1}--\eqref{eq:tensor2} with $\Omega=B_R$ for a certain $R>1$ and $\Sigma\subset B_1$ with a connected Lipschitz boundary $\partial \Sigma$. Furthermore, let $n_\delta$ be the smallest integer such that
\begin{equation}\label{eq:aaa1}
 R^{-n_{\delta}} < \delta,
 \end{equation}
and $c=\zeta_i$, $1\leq i\leq 3$, where $\zeta_i$ is given in \eqref{eq:coefficient_pi} with $n=n_\delta$. Consider the elastostatic system (\ref{eq:lame1}), with $\mathbf{C}_{\widetilde\lambda,\widetilde\mu}$ described above.  Then for the source $\bff$ of the form \eqref{eq:source2} only with $\gamma_{n,\zeta_i,k} \neq 0$, namely $\gamma_{n,\zeta_j,k}=0$, $j\neq i$, supported at a distance $q$ from the origin with $R < q < R^*:=R^{3/2}$, the configuration $(\mathbf{C}_{\widetilde\lambda,\widetilde\mu},\bff)$ is resonant.
\end{thm}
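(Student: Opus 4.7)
The plan is to invoke the dual variational principle of Theorem~\ref{thm:primaldual}: to show $\mathbf{E}_\delta \to +\infty$ it suffices to produce a pair $(\mathbf{v}_\delta,\bm{\psi}_\delta)\in\mathcal{S}\times\mathcal{S}$ satisfying the PDE constraint $\mathcal{L}_{\lambda_A,\mu_A}\bm{\psi}_\delta+\delta\mathcal{L}_{\lambda,\mu}\mathbf{v}_\delta=\mathbf{0}$ and for which $\mathbf{J}_\delta(\mathbf{v}_\delta,\bm{\psi}_\delta)\to+\infty$ along a sequence $\delta=\delta_j\to 0$. The building block will be the perfect plasmon wave $\widehat{\bm{\psi}}_{n_\delta,k}$ from Theorem~\ref{thm:perfectwaves} associated with $c=\zeta_i$ at the level $n=n_\delta$. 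Specifically, I would set $\bm{\psi}_\delta=\tau_\delta\Re\overline{\widehat{\bm{\psi}}_{n_\delta,k}}$ (switching to $\Im$ if that is what matches the sign of the relevant Fourier coefficient $\gamma_{n_\delta,\zeta_i,k}$), with an amplitude $\tau_\delta\in\mathbb{R}$ to be optimized later. This mimics the choice that worked in the core-free case (Theorem~\ref{thm:nocore}), but now the presence of $\Sigma$ forces a nontrivial $\mathbf{v}_\delta$.

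The key observation is that the wave $\widehat{\bm{\psi}}_{n_\delta,k}$ was built in Theorem~\ref{thm:perfectwaves} for the \emph{core-free} material $A'=c$ on $B_R$, $A'=1$ outside; in our actual configuration $A$ differs from $A'$ only on $\Sigma$. Therefore $\mathcal{L}_{\lambda_A,\mu_A}\bm{\psi}_\delta$ vanishes in each of the open regions $\Sigma$, $B_R\setminus\overline{\Sigma}$, $\mathbb{R}^3\setminus\overline{B_R}$ and across $\partial B_R$, but it produces a surface-supported distribution on $\partial\Sigma$ coming from the traction mismatch (the shell-side traction uses the Lam\'e pair $(c\lambda,c\mu)$, while the core-side uses $(\lambda,\mu)$). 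Explicitly,
\[
\mathcal{L}_{\lambda_A,\mu_A}\bm{\psi}_\delta \;=\; \tau_\delta(1-c)\,\partial_{\bm{\nu}_{\lambda,\mu}}\Re\overline{\widehat{\bm{\psi}}_{n_\delta,k}}\Big|_{\partial\Sigma}\ \mathcal{H}^2\lfloor\partial\Sigma .
\]
I would then define $\mathbf{v}_\delta\in\mathcal{S}$ as the unique solution of $\mathcal{L}_{\lambda,\mu}\mathbf{v}_\delta=-\delta^{-1}\mathcal{L}_{\lambda_A,\mu_A}\bm{\psi}_\delta$, i.e.\ a Lam\'e transmission problem with prescribed traction jump on $\partial\Sigma$. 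By construction the pair $(\mathbf{v}_\delta,\bm{\psi}_\delta)$ is admissible in \eqref{eq:dual}.

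Next I would estimate, by direct integration in spherical coordinates and the orthonormalization of Remark~\ref{rem:ppw1}, the three ingredients of $\mathbf{J}_\delta(\mathbf{v}_\delta,\bm{\psi}_\delta)$: writing $N:=n_\delta$, the source pairing satisfies $\int\mathbf{f}\cdot\bm{\psi}_\delta\;\sim\;C_0\,\tau_\delta\,\gamma_{N,\zeta_i,k}\,R^{2N+1}/q^{N-1}$; the dissipation $\delta\mathbf{P}_{\lambda,\mu}(\bm{\psi}_\delta,\bm{\psi}_\delta)$ is $\lesssim\delta\tau_\delta^2 N R^{2N+1}$ (the interior $r^N$ profile and the exterior $R^{2N+1}/r^{N+1}$ tail contribute comparable amounts); finally, since $\Sigma\Subset B_1$ there is $r_0<1$ with $\Sigma\subset B_{r_0}$, so the polynomial profile $r^N\mathbf{Y}_N$ of $\widehat{\bm{\psi}}_{n_\delta,k}$ inside $B_R$ and its gradient are $O(Nr_0^{N-1})$ on $\partial\Sigma$, which yields $\|\nabla\mathbf{v}_\delta\|_{L^2(\mathbb{R}^3)}\lesssim\tau_\delta Nr_0^{N-1}/\delta$ by standard elliptic regularity for the Lam\'e surface source, and hence $\delta\mathbf{P}_{\lambda,\mu}(\mathbf{v}_\delta,\mathbf{v}_\delta)\lesssim\tau_\delta^2 N^2 r_0^{2N-2}/\delta$. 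Using $R^{-N}<\delta\le R^{1-N}$ one sees that the $\mathbf{v}_\delta$-dissipation is dominated by the $\bm{\psi}_\delta$-dissipation (because $r_0^2<1<R$), so
\[
\mathbf{J}_\delta(\mathbf{v}_\delta,\bm{\psi}_\delta)\;\ge\;C_0\tau_\delta\frac{R^{2N+1}}{q^{N-1}}-C_1\tau_\delta^2\,NR^{N+2}
\]
after absorbing lower-order terms. Optimizing in $\tau_\delta$ yields the lower bound $\mathbf{J}_\delta\gtrsim\frac{q^2}{N}\bigl(R^3/q^2\bigr)^{N}$, which tends to $+\infty$ as $\delta\to 0$ precisely when $q^2<R^3$, i.e.\ when $q<R^{3/2}=R^*$.

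The main obstacle is the sharp control of the auxiliary field $\mathbf{v}_\delta$ generated by the traction jump across the \emph{arbitrary} Lipschitz interface $\partial\Sigma$, because the factor $1/\delta$ appearing there is potentially catastrophic. The saving mechanism is the polynomial profile $r^N$ of $\widehat{\bm{\psi}}_{n_\delta,k}$ inside the shell, which, combined with the strict inclusion $\Sigma\Subset B_1$, provides the decay $r_0^{N-1}$ with $r_0<1$; this is what tames the $1/\delta$ blow-up and renders the $\mathbf{v}_\delta$-contribution negligible in the final balance. The critical radius $R^{3/2}$ then emerges naturally as the exact threshold at which the source factor $(R^{2}/q)^{N}$ still out-weighs the $\bm{\psi}_\delta$-dissipation factor $\delta NR^{2N+1}\sim NR^{N+2}$.
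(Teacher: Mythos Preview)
Your proposal is correct and follows essentially the same route as the paper: both use the dual variational principle with $\bm{\psi}_\delta=\tau_\delta\Re\overline{\widehat{\bm{\psi}}_{n_\delta,k}}$, absorb the traction mismatch across $\partial\Sigma$ into an auxiliary field $\mathbf{v}_\delta$ solving $\mathcal{L}_{\lambda,\mu}\mathbf{v}_\delta=-\delta^{-1}\mathcal{L}_{\lambda_A,\mu_A}\bm{\psi}_\delta$, and then optimize in $\tau_\delta$ to produce the lower bound $\mathbf{E}_\delta\gtrsim n_\delta^{-1}(R^3/q^2)^{n_\delta}$.

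One small caveat: you appeal to a \emph{strict} inclusion $\Sigma\Subset B_1$ (extracting $r_0<1$) to render the $\mathbf{v}_\delta$-dissipation negligible, whereas the hypothesis only grants $\Sigma\subset B_1$. The paper sidesteps this by using the coarser estimate $\delta\mathbf{P}_{\lambda,\mu}(\mathbf{v}_\delta,\mathbf{v}_\delta)\le C\delta^{-1}\tau_\delta^2 n_\delta$ (no $r_0$ factor) and treating the two dissipation terms as \emph{comparable}; the calibration $1<\delta R^{n_\delta}\le R$ then balances $\delta^{-1}\tau_\delta^2 n_\delta$ against $\delta\tau_\delta^2 n_\delta R^{2n_\delta}$ and the same threshold $q<R^{3/2}$ drops out. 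Your argument survives if you simply set $r_0=1$: the extra polynomial factor in $n_\delta$ is harmless against the exponential $(R^3/q^2)^{n_\delta}$.
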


\begin{proof}

In what follows, we only give the proof when $c=\zeta_1$ with
\begin{equation}\label{eq:source_pi_1}
 \bff = \sum_{n=1}^{\infty}\left( \sum_{k=1}^{2n+1}  \gamma_{n,\zeta_1,k} \mathbf{G}^{n,\zeta_1,k}  \right) \mathbf{f}_n^q,
\end{equation}
and when $c=\zeta_2$ or $c=\zeta_3$, the corresponding discussion is similar

 We fix $R < q < R^*$ and a sequence $\delta=\delta_j \rightarrow +0$ and consider a force term $\bff$ given by (\ref{eq:source_pi_1}). Our aim is to find a sequence $(\bv_{\delta},\bm{\psi}_{\delta})$, satisfying the PDE constraint $\mathcal{L}_{\lambda_A,\mu_A} \bm{\psi}_{\delta} + \delta \cl_{\lambda,\mu}\bv_{\delta}=\mathbf{0}$ in (\ref{eq:dual}) and such that $\mathbf{J}_{\delta}(\bv_{\delta},\bm{\psi}_{\delta})\rightarrow +\infty $.

First, we choose
\begin{equation}\label{eq:test_function}
  \bm{\psi}_{\delta} : \equiv \tau_{\delta} \Re \overline{\widehat{\bm{\psi}}_{n_{\delta},k}},
\end{equation}
where $\widehat{\bm{\psi}}_{n_{\delta},k}$ is given by (\ref{eq:solution1}). The number $\tau_{\delta} \in \mathbb{R}$ will be properly chosen below. For $\bm{\psi}_{\delta}$, it is apparent that $\mathcal{L}_{\lambda_A,\mu_A} \bm{\psi}_{\delta} \neq\mathbf{0}$ along the core interface $\partial \Sigma \subset B_1$. In order to satisfy the PDE constraint we define $\bv_{\delta}$ to be the solution of $ -\delta \cl_{\lambda,\mu}\bv_{\delta} = \mathcal{L}_{\lambda_A,\mu_A} \bm{\psi}_{\delta}$. Since $- \cl_{\lambda,\mu}$ is an elliptic PDO, by the standard elliptic estimates one can arrive at the following estimate:
\begin{equation}
  \delta \mathbf{P}_{\lambda,\mu}(\bv_{\delta}, \bv_{\delta}) \leq C_0 \delta^{-1} \| \mathcal{L}_{\lambda_A,\mu_A} \bm{\psi}_{\delta} \|^2_{H^{-1}(\mathbb{R}^3)^3} \leq C_0 \delta^{-1} \tau_{\delta}^2 n_{\delta}.
\end{equation}

It remains to calculate the energy $\mathbf{J}_{\delta}(\bv_{\delta},\bm{\psi}_{\delta})$. Since $n_{\delta}$ is the smallest integer fulfilling \eqref{eq:aaa1}, one clearly has that $R^{-n_{\delta} +1} \geq \delta$. With the help of the dual variational principle in Theorem~\ref{thm:primaldual}, we have
\begin{equation}
  \begin{split}
    \mathbf{E}_{\delta}(\bu_{\delta}) & \geq \mathbf{J}_{\delta}(\bv_{\delta},\bm{\psi}_{\delta})= \int \bff \cdot \bm{\psi}_{\delta} - \frac{\delta}{2} \mathbf{P}_{\lambda,\mu}(\bv_{\delta},\bv_{\delta}) - \frac{\delta}{2}\mathbf{P}_{\lambda,\mu}(\bm{\psi}_{\delta},\bm{\psi}_{\delta})   \\
      & \geq (C_0 \Re\gamma_{n_{\delta},\zeta_1,k} \tau_{\delta} q^{-n_{\delta}} R^{2n_{\delta}} -C_1 \delta^{-1} \tau_{\delta}^2 n_{\delta} - C_1 \delta  \tau_{\delta}^2 n_{\delta} R^{2 n_{\delta}} )(\mathbf{G}^{n_{\delta},\zeta_1,k}:\overline{\mathbf{G}^{n_{\delta},\zeta_1,k}})\\
      & \geq \tau_{\delta} R^{n_{\delta}} (\mathbf{G}^{n_{\delta},\zeta_1,k}:\overline{\mathbf{G}^{n_{\delta},\zeta_1,k}}) \bigg(  C_0 \Re\gamma_{n_{\delta},\zeta_1,k} \left(\frac{R}{q} \right)^{n_{\delta}}\\
      &\hspace*{4cm} - C_1 \frac{1}{(\delta R^{n_{\delta}} )} \tau_{\delta} n_{\delta} - C_1 \tau_{\delta} n_{\delta}(\delta R^{n_{\delta}})   \bigg).
  \end{split}
\end{equation}
The choice of $R^{n_{\delta}} < \delta$ with $1< \delta R^{n_{\delta}} \leq R$ ensures that the last two contributions are of comparable order. We then find, for some $C_1>0$,
\begin{equation}\label{eq:dd1}
  \mathbf{E}_{\delta}(\bu_{\delta}) \geq \tau_{\delta} R^{n_{\delta}}  \left(  C_0 \Re\gamma_{n_{\delta},\zeta_1,k} \left(\frac{R}{q} \right)^{n_{\delta}}  - C_1 \tau_{\delta} n_{\delta}  \right).
\end{equation}
We choose $\tau_{\delta}$ to be
\begin{equation}\label{eq:dd2}
  \tau_{\delta}= \frac{1}{2 C_1 n_{\delta}  } C_0 \Re\gamma_{n_{\delta},\zeta_1,k} \left(\frac{R}{q} \right)^{n_{\delta}},
\end{equation}
and then from \eqref{eq:dd1} and \eqref{eq:dd2} we readily have that
\begin{equation}\label{eq:dd3}
  \mathbf{E}_{\delta}(\bu_{\delta}) \geq \tau_{\delta} R^{n_{\delta}} \left( \frac{1}{2} C_0 \Re\gamma_{n_{\delta},\zeta_1,k} \left(\frac{R}{q} \right)^{n_{\delta}}  \right) = \frac{1}{4 C_1 n_{\delta} } (C_0 \Re\gamma_{n_{\delta},\zeta_1,k})^2 \left(\frac{R^3}{q^2} \right)^{n_{\delta}}.
\end{equation}
By the assumption, $q<R^*$ and if the sequence of the Fourier coefficients $\gamma_{n,,\zeta_1,k}$ of the force term $\mathbf{f}$ decays not very quickly (ensuring that the RHS term of \eqref{eq:dd3} goes to infinity as $\delta\rightarrow+0$), we easily see from \eqref{eq:dd3} that $\mathbf{E}_{\delta}(\bu_{\delta}) \rightarrow +\infty$ as $\delta \rightarrow +0$.

Finally, by setting
\begin{equation}
  \bm{\psi}_{\delta} : \equiv \tau_{\delta} \Im \overline{\widehat{\bm{\psi}}_{n_{\delta},k}},
\end{equation}
in (\ref{eq:test_function}), by a similar argument as above, one can have a similar conclusion.

Thus, this proof is complete.
\end{proof}

\subsection{Non-resonance in the radial case}

In Section~\ref{sect:ALR1}, we show that for certain force terms lying within the critical radius $R^*$, the resonance occurs. In this section, we shall show that if the force terms lying outside the critical radius, then resonance does not occur. To that end, we would consider our study in the radial geometry by assuming that the core $\Sigma=B_1$.

\begin{thm}\label{thm:main3}
Consider the elastic configuration $(\mathbf{C}_{\widetilde\lambda,\widetilde\mu},\bff)$, where $\mathbf{C}_{\widetilde\lambda,\widetilde\mu}$ is described in \eqref{eq:tensor1}--\eqref{eq:tensor2} with $c=\zeta_1$ and, $\Omega=B_R$ for a certain $R>1$ and $\Sigma= B_1$. Consider the elastostatic system (\ref{eq:lame1}), with $\mathbf{C}_{\widetilde\lambda,\widetilde\mu}$ describe above. Let the source $\bff$ be given by (\ref{eq:source2}) with $\gamma_{n,\zeta_i,k}=0$, $i=2,3$, supported at the distance $q$ with $q>R^* := R^{3/2}$, then the configuration $(\mathbf{C}_{\widetilde\lambda,\widetilde\mu},\bff)$ is non-resonant.
\end{thm}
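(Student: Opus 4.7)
The plan is to apply the primal variational principle of Theorem~\ref{thm:primaldual} by constructing an admissible pair $(\bv_\delta,\bw_\delta)\in\mathcal{S}\times\mathcal{S}$ satisfying $\mathcal{L}_{\lambda_A,\mu_A}\bv_\delta - \mathcal{L}_{\lambda,\mu}\bw_\delta = \bff$ for which $\mathbf{I}_\delta(\bv_\delta,\bw_\delta)$ stays uniformly bounded as $\delta\to 0^+$; since $\mathbf{E}_\delta\le\mathbf{I}_\delta$, this yields the claimed non-resonance. The starting point is the mode-by-mode ansatz from the proof of Theorem~\ref{thm:nonresoance}: expand $\bff$ via \eqref{eq:source2}, take for each $(n,k)$ the piecewise spherical-harmonic trial field $\widehat{\bv}_{n,k}$ of \eqref{eq:def3} (adapted to $\Sigma=B_1$, $\Omega=B_R$), and set $\bv_\delta = \sum_{n,k}\tau_{n,k}\widehat{\bv}_{n,k}$ with $\tau_{n,k}=\gamma_{n,\zeta_1,k}/e_6(n)$ and $\bw_\delta\equiv 0$.

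For modes $n\ne n_\delta$, the plasmon constant $c=\zeta_1(n_\delta)$ does not coincide with $\zeta_1(n)$, so the coefficient $e_6(n)$ is non-degenerate and the estimate from the proof of Theorem~\ref{thm:nonresoance} controls their contribution to $\mathbf{I}_\delta$ by $C\delta\|\mathbf{F}\|_{L^2(\partial B_q)}^2\to 0$. The delicate case is the resonant mode $n=n_\delta$, where the identity $n+2+c(n-1)=0$ at $c=\zeta_1(n_\delta)$ forces $|e_6(n_\delta)|\asymp n_\delta q^{n_\delta-1}/R^{2n_\delta+1}$, hence $|\tau_{n_\delta}|\asymp|\gamma_{n_\delta,\zeta_1,k}|R^{2n_\delta+1}/(n_\delta q^{n_\delta-1})$; this amplitude alone only gives a bound of order $(R^5/q^2)^{n_\delta}$ on the corresponding piece of $\mathbf{I}_\delta$, which is too weak in the range $R^{3/2}<q\le R^{5/2}$. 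To push the threshold down to $R^{3/2}$, I modify the resonant-mode pair by subtracting a multiple of the perfect plasmon elastic wave from Theorem~\ref{thm:perfectwaves}:
\[
\bv_\delta^{\mathrm{res}} := \tau_{n_\delta}\widehat{\bv}_{n_\delta,k} - \sigma_\delta\,\Re\,\widehat{\bm{\psi}}_{n_\delta,k},\qquad -\mathcal{L}_{\lambda,\mu}\bw_\delta^{\mathrm{res}} = \sigma_\delta\,\mathcal{L}_{\lambda_A,\mu_A}\,\Re\,\widehat{\bm{\psi}}_{n_\delta,k},
\]
where $\widehat{\bm{\psi}}_{n_\delta,k}$ is extended across $\partial B_1$ (so that $\mathcal{L}_{\lambda_A,\mu_A}\widehat{\bm{\psi}}_{n_\delta,k}$ is a surface distribution on $\partial B_1$ arising from the jump in $A$), and $\sigma_\delta$ is tuned to cancel the leading $r^{n_\delta}$-component of $\widehat{\bv}_{n_\delta,k}$ in the shell and inner-matrix regions. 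Standard elliptic regularity for $\mathcal{L}_{\lambda,\mu}$ then yields $\mathbf{P}(\bw_\delta^{\mathrm{res}})\lesssim\sigma_\delta^2\,n_\delta$.

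After the cancellation, the residual $\delta\mathbf{P}(\bv_\delta^{\mathrm{res}})$ and $\mathbf{P}(\bw_\delta^{\mathrm{res}})/\delta$ combine, using $\delta\asymp R^{-n_\delta}$ from \eqref{eq:aaa1}, to give $\mathbf{I}_\delta \lesssim |\gamma_{n_\delta,\zeta_1,k}|^2\,(R^3/q^2)^{n_\delta}$, which tends to zero as $\delta\to 0^+$ whenever $q>R^{3/2}$; summing over $(n,k)$ and using $\sum|\gamma_{n,\zeta_1,k}|^2<\infty$ produces a uniform bound on $\mathbf{I}_\delta$. The main obstacle is making the cancellation quantitative enough to gain the full factor $R^{2n_\delta}$ in the energy estimate: one has to use the explicit values $e_1=3/(n+2)$, $e_3=R^{-2n-1}$, $e_4=3(R^{2n+1}-1)/(n+2)$ that the coefficients in $\widehat{\bv}_{n,k}$ take at $c=\zeta_1(n_\delta)$, identify the leading shell and inner-matrix pieces of $\widehat{\bv}_{n_\delta,k}$ as proportional to $\widehat{\bm{\psi}}_{n_\delta,k}$ (with explicit proportionality factor $3/(n_\delta+2)$), and then carefully balance the subleading $\mathbf{P}$-remainder of $\bv_\delta^{\mathrm{res}}$ against the $H^{-1}$-norm of the surface source driving $\bw_\delta^{\mathrm{res}}$ so as to close the bound exactly at the critical exponent $R^{3/2}$.
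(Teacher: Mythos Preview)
Your plan for the non-resonant modes $n\ne n_\delta$ is fine and matches the paper.  The gap is in the treatment of the resonant mode.

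The subtraction you propose does succeed in bringing $\delta\,\mathbf{P}_{\lambda,\mu}(\bv_\delta^{\mathrm{res}},\bv_\delta^{\mathrm{res}})$ down to the desired size $\sim |\gamma_{n_\delta}|^2\,(R^3/q^2)^{n_\delta}$, but the price you pay in $\bw_\delta^{\mathrm{res}}$ is too high.  With $\sigma_\delta=3\tau_{n_\delta}/(n_\delta+2)$ and $|\tau_{n_\delta}|\asymp |\gamma_{n_\delta}|\,R^{2n_\delta+1}/(n_\delta q^{n_\delta-1})$, the traction jump of $\widehat{\bm\psi}_{n_\delta,k}$ across $\partial B_1$ is of order $n_\delta$, so the $H^{-1}$-norm of the surface source driving $\bw_\delta^{\mathrm{res}}$ is $\sim\sigma_\delta\sqrt{n_\delta}$ and hence
\[
\frac{1}{\delta}\,\mathbf{P}_{\lambda,\mu}(\bw_\delta^{\mathrm{res}},\bw_\delta^{\mathrm{res}})
\ \lesssim\ \frac{\sigma_\delta^2\,n_\delta}{\delta}
\ \sim\ R^{n_\delta}\cdot\frac{|\gamma_{n_\delta}|^2\,R^{4n_\delta}}{n_\delta^{3}\,q^{2n_\delta}}
\ =\ \frac{|\gamma_{n_\delta}|^2}{n_\delta^{3}}\,\Big(\frac{R^5}{q^2}\Big)^{n_\delta},
\]
which is exactly the naive threshold you set out to improve.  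Optimising over $\sigma_\delta$ does not help: since $\delta^2 R^{2n_\delta}\asymp 1$, the minimiser of $\delta(\tau_{n_\delta}e_1-\sigma_\delta)^2 n_\delta R^{2n_\delta}+\delta^{-1}\sigma_\delta^2 n_\delta$ sits at $\sigma_\delta\sim \tau_{n_\delta}e_1$, where both terms are of this same $(R^5/q^2)^{n_\delta}$ order.  The underlying problem is that you begin from the exact resonant solution $\tau_{n_\delta}\widehat{\bv}_{n_\delta,k}$, whose amplitude already carries the amplification factor $R^{2n_\delta}$; subtracting a plasmon mode of comparable size simply shifts that factor into the source for $\bw_\delta$ at $\partial B_1$.

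The paper sidesteps this by \emph{not} using $\widehat{\bv}_{n_\delta,k}$ for the resonant mode at all.  For $n=n_\delta$ it takes the elementary trial
\[
\widehat{\bV}_{n_\delta,k}(x)=
\begin{cases}
\mathbf{G}^{n_\delta,\zeta_1,k}\,r^{n_\delta}\mathbf{Y}_{n_\delta}(\hat x),& \|x\|\le q,\\[2pt]
\mathbf{G}^{n_\delta,\zeta_1,k}\,q^{2n_\delta+1}\,r^{-n_\delta-1}\mathbf{Y}_{n_\delta}(\hat x),& \|x\|>q,
\end{cases}
\]
which ignores the material interfaces entirely, and sets $\bv_{\delta,n_\delta,k}=\tau_{n_\delta}\widehat{\bV}_{n_\delta,k}$ with the \emph{small} amplitude $\tau_{n_\delta}=-\gamma_{n_\delta,\zeta_1,k}/\big((2n_\delta+1)q^{n_\delta-1}\big)$ chosen so that the traction jump at $\partial B_q$ reproduces $\bff_{n_\delta,k}$.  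The residual $\bff_{n_\delta,k}-\mathcal{L}_{\lambda_A,\mu_A}\bv_{\delta,n_\delta,k}$ is then supported only on $\partial B_1\cup\partial B_R$ and is absorbed into $\bw_\delta$.  Because this $\tau_{n_\delta}$ contains \emph{no} $R^{2n_\delta}$ factor, one gets
\[
\frac{1}{\delta}\,\mathbf{P}_{\lambda,\mu}(\bw_\delta,\bw_\delta)\ \lesssim\ \frac{|\tau_{n_\delta}|^2\,n_\delta\,R^{2n_\delta}}{\delta}
\ \sim\ \frac{|\gamma_{n_\delta}|^2}{n_\delta}\Big(\frac{R^{3}}{q^{2}}\Big)^{n_\delta},
\]
which is bounded precisely for $q>R^{3/2}$.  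So the fix is not to subtract a plasmon wave from the resonant solution, but to replace the resonant-mode trial by this interface-blind one and let $\bw_\delta$ pick up the (now small) transmission defects at $\partial B_1$ and $\partial B_R$.
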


\begin{proof}
We make use of the primal variational principle to show the non-resonance result. We shall construct the test function $(\bv_{\delta},\bw_{\delta})$, satisfying the constraint
\begin{equation}\label{eq:thm_constraint1}
   \mathcal{L}_{\lambda_A,\mu_A} \bv_{\delta} -\cl_{\lambda,\mu} \bw_{\delta} =\bff
\end{equation}
such that the energy along this sequence , $\mathbf{I}_{\delta}(\bv_{\delta},\bw_{\delta})$ remains bounded.

Let $\bv_{\delta}$ be of the form:
\begin{equation}
  \bv_{\delta} = \sum_{n\neq n_\delta} \sum_{k} ( \bv_{\delta, n, k} + \bv_{\delta, n_\delta, k} ),
\end{equation}
where $\bv_{\delta, n, k}$, $n \neq n_{\delta}$, satisfies
\begin{equation}\label{eq:relation_1}
   \mathcal{L}_{\lambda_A,\mu_A} \bv_{\delta,n,k}= \gamma_{n,\zeta_1,k} \mathbf{G}^{n,\zeta_1,k} \mathbf{f}_n^q \quad \mbox{in} \quad \mathbb{R}^3,
\end{equation}
and $\bv_{\delta, n_{\delta}, k}$ satisfies
\begin{equation}\label{eq:relation_2}
   \mathcal{L}_{\lambda_A,\mu_A} \bv_{\delta,n_{\delta},k}= \gamma_{n_{\delta},\zeta_1,k} \mathbf{G}^{n_{\delta},\zeta_1,k} f_{n_{\delta}}^q \quad \mbox{on} \quad \partial B_q.
\end{equation}
By setting
\begin{equation}
  \bv_{\delta,n,k} = \tau_{n,k} \widehat{\bv}_{n,k},
\end{equation}
where $\tau_{n,k}$ and $\widehat{\bv}_{n,k}$ are given in (\ref{coeff_tau}) and (\ref{eq:def3}) with $c$ replaced by $-1-\frac{3}{n_{\delta}-1}$, respectively, one can readily verify that $\bv_{\delta, n, k}$, $n \neq n_{\delta}$, defined above satisfies (\ref{eq:relation_1}). Next we define
\begin{equation}
  \widehat{\bV}_{n_\delta,k}=\left\{
                             \begin{array}{ll}
                               \mathbf{G}^{n_\delta,\zeta_1,k} r^{n_\delta} \mathbf{Y}_{n_\delta}, & \quad \|x\| \leq q,\medskip \\
                               \mathbf{G}^{n_\delta,\zeta_1,k} q^{2 n_\delta +1} r^{-n_\delta -1} \mathbf{Y}_{n_\delta}, & \quad \|x\| > q,
                             \end{array}
                           \right.
\end{equation}
and set
\begin{equation}\label{eq:tau}
  \bv_{\delta, n_\delta, k}=\tau_{n_\delta,k} \widehat{\bV}_{n_\delta,k}, \qquad \tau_{n_\delta,k} = \frac{-\gamma_{n_\delta,\zeta_1,k}}{(2n_\delta +1) q^{n_\delta-1}},
\end{equation}
then it is easy to see that (\ref{eq:relation_2}) holds.

In order to satisfy the constraint (\ref{eq:thm_constraint1}), we choose $\bw_{\delta}$ as follows:
\begin{equation}\label{eq:w}
\begin{split}
   -\cl_{\lambda,\mu} \bw_{\delta} & =\bff -\mathcal{L}_{\lambda_A,\mu_A} \bv_{\delta} \\
    & =\sum_k \left(\bff_{n_\delta,k} - \mathcal{L}_{\lambda_A,\mu_A} \bv_{\delta, n_\delta, k}\right),
\end{split}
\end{equation}
where
\begin{equation}
  \bff_{n_\delta,k} = \gamma_{n_{\delta},\zeta_1,k} \mathbf{G}^{n_\delta,\zeta_1,k} f_{n_{\delta}}^q,
\end{equation}
and the last equation follows from (\ref{eq:relation_1}) and (\ref{eq:relation_2}).

It remains to calculate the energy $\mathbf{I}_{\delta}(\bv_{\delta},\bw_{\delta})$. First, for $\bv_{\delta, n_\delta,k}$, one has the following estimate:
\begin{equation}
  \delta \mathbf{P}_{\lambda,\mu}(\bv_{\delta, n_\delta,k}, \bv_{\delta, n_\delta,k}) = C \delta |\tau_{n_\delta,k}|^2 n_\delta q^{2n_\delta} \leq C \delta |\gamma_{n_\delta,k}|^2,
\end{equation}
and from the proof of theorem \ref{thm:nonresoance}, it is easy to have for $\bv_{\delta, n,k}$, $n \neq n_{\delta}$ that
\begin{equation}
   \delta \mathbf{P}_{\lambda,\mu}(\bv_{\delta,n,k}, \bv_{\delta,n,k}) \leq C\delta |\gamma_{n,k}|^2.
\end{equation}
Finally, we have
\begin{equation}
  \delta \mathbf{P}_{\lambda,\mu}(\bv_{\delta}, \bv_{\delta}) \leq C\delta \|\mathbf{F}\|^2_{L^2(\partial B_q)}.
\end{equation}
Next we estimate the energy due to $\bw_\delta$, and by (\ref{eq:w}) one has
\begin{equation}
  \frac{1}{\delta} \mathbf{P}_{\lambda,\mu}(\bw_{\delta}, \bw_{\delta}) \leq C \frac{1}{\delta} \|\sum_k \left( \bff_{n_\delta,k} - \mathcal{L}_{\lambda_A,\mu_A} \bv_{\delta, n_\delta,k} \right) \|^2_{H^{-1}} \leq C \frac{1}{\delta} \sum_k |\tau_{n_\delta,k}|^2 n_{\delta} R^{2n_\delta}.
\end{equation}
With the help of (\ref{eq:tau}) and the choice of $n_\delta$, $R^{-n_\delta} \approx \delta$, we have
\begin{equation}
  \frac{1}{\delta} \mathbf{P}_{\lambda,\mu}(\bw_{\delta}, \bw_{\delta}) \leq C \frac{1}{\delta} \sum_k \frac{\gamma_{n_\delta,k}^2}{n_\delta} \left(\frac{R}{q} \right)^{2n_\delta} \leq C \sum_k \frac{\gamma_{n_\delta,k}^2}{n_\delta} \left(\frac{R^{3/2}}{q} \right)^{2n_\delta}.
\end{equation}
Then if $q>R^{3/2}$, $\frac{1}{\delta} \mathbf{P}_{\lambda,\mu}(\bw_{\delta}, \bw_{\delta})$ is bounded.

By summarising our earlier deduction, we clearly have shown that $\mathbf{I}_{\delta}(\bv_{\delta},\bw_{\delta})$ is bounded. That is, the configuration $(\mathbf{C}_{\widetilde\lambda,\widetilde\mu},\bff)$ is non-resonant. 

The proof is complete.

\end{proof}

\section{Concluding remarks}

In this paper, we consider the plasmon resonance for the elastostatic system in $\mathbb{R}^3$. First, we show that the plasmon device in the literature which induce resonance in $\mathbb{R}^2$ does not induce resonance in $\mathbb{R}^3$. Then, we derive two novel plasmon devices, one with a core and the other one without a core, such that plasmon resonances can occur. This is mainly based on the highly nontrivial derivation of the proper plasmon parameters in Theorem~\ref{thm:perfectwaves}. For the novel constructions, we show both resonance and non-resonance results for a very broad class of distributional sources based on variational arguments. Furthermore, we establish the dependence of the plasmon resonances on the location of the force term. That is, there exists a critical radius such that when the source is located within the critical radius, then resonance occurs, whereas if the source is located outside the critical radius, then resonance does not occur. By comparing our main results in Theorems~\ref{thm:nonresoance}--\ref{thm:main3} with those resonance and non-resonance results in the two-dimensional case in \cite{AKKY,LiLiu2d}, the plasmon resonances in the three-dimensional case reveal some distinct and novel behaviours. The variational approach was initiated in \cite{Klsap} for the plasmon resonance in the electrostatics modelled by the Laplace equation. As can be seen that the variational approach is well situated to treat the resonance and non-resonance of the plasmon devices; whereas the spectral approach initiated in \cite{Ack13} for the elastostatics can be used to show both resonance and non-resonance as well as their localised and cloaking effects. However, for the latter approach, it requires exact spectral information of the associated Neumann-Poincar\'e operator, which is not always available. In this paper, we adopt the spectral approach to treat the much more complicated and challenging plasmon resonances in the elastostatics modelled by the Lam\'e system. Similar to the electrostatics in \cite{Klsap,LLL}, we could only establish the resonance and non-resonance results, as well as their dependence on the source location, and we didn't show the localised and cloaking effects. Nevertheless, as pointed out in Remark~\ref{rem:NP}, using our results in Theorem~\ref{thm:perfectwaves}, one can derive the spectral properties of the Neumann-Poincar\'e operator in \eqref{eq:NPn1}. Then, by following the spectral arguments, one may also be able to show the localised and cloaking effects for the plasmon resonance in the three-dimensional elastostatics. We shall report those findings in our future work.

\section*{Acknowledgement}

The work was supported by the FRG grants from Hong Kong Baptist University, Hong Kong RGC General Research Funds, 12302415 and 405513, and the NSF grant of China, No. 11371115.

\end{document}